\newcommand{\trp}{^{\text{T}}}
\newcommand{\B}{\mathcal{B}}
\newcommand{\cof}{\mathrm{cof\, }}
\newcommand{\RR}{\mathbb{R}}
\newcommand{\del}{\partial}
\newcommand{\eps}{\varepsilon}
\newcommand{\f}[2]{\frac{#1}{#2}}
\newcommand{\gr}[1]{\nabla {#1}}
\newcommand{\pd}[2]{\frac{\partial #1}{\partial #2}}
\newcommand{\BBR}[1]{\left( #1 \right)}
\newcommand{\BBS}[1]{\left[ #1 \right]}
\newcommand{\BBF}[1]{\left\{ #1 \right\}}
\newcommand{\BBN}[1]{\left\| #1 \right\|}
\newcommand{\BBA}[1]{\left | #1 \right |}
\newcommand{\txq}[1]{\quad \mbox{#1} \quad}
\newcommand{\txc}[1]{\; \mbox{#1} \;}
\newcommand{\tx}{\mbox}
\newcommand{\mdd}[1]{M^{#1\times #1}}
\newcommand{\mddplus}[1]{M^{#1\times #1}_+}
\newcommand{\diagmtx}[3]{\begin{bmatrix} #1 \; \quad \; \quad\\\quad \; #2 \; \quad \\\quad \; \quad \; #3\end{bmatrix}}
\newtheorem{lmm}{Lemma}
\newtheorem{thm}{Theorem}
\newtheorem{Remark}{Remark}
\title{A variational approximation scheme for radial polyconvex elasticity
that preserves the positivity of Jacobians\\
{\footnotesize (In: {\it Communications in Mathematical Sciences} 10-1, pp. 87-115 (2012))} }
\author{Alexey Miroshnikov\thanks {Department of Mathematics, University of Maryland,
College Park, USA}
\;   and \, Athanasios E. Tzavaras
\thanks{Department of Applied Mathematics,
University of Crete, Heraklion, Greece.}}
\date{}
\begin{document}

\maketitle

\begin{center}
{\it To our colleague, teacher and friend David Levermore \\
on his 60th anniversary}
\end{center}

\begin{center}
\textbf{Abstract}
\end{center}
\noindent We consider the equations describing the dynamics of
radial motions for isotropic elastic materials; these form a
system of non-homogeneous conservation laws. We construct a
variational approximation scheme that decreases the total
mechanical energy and at the same time leads to physically
realizable motions that avoid interpenetration of matter.

\begin{center} Mathematics Subject Classification:
35L70
49J45
74B20
74H20
\end{center}

\section{Introduction}

The equations describing radial motions of nonlinear, isotropic, elastic materials take the form
\begin{equation}\label{RPDEINTRO}
\begin{aligned}
       w_{tt}
        &=\frac{1}{R^2} \pd{}{R}\BBR{R^{2} \pd{\Phi}{v_1} \big ( w_R,\frac{w}{R},\frac{w}{R} \big )}
         -\frac{1}{R}\sum_{i=2}^{3} \pd{\Phi}{v_i} \big (w_R,\frac{w}{R},\frac{w}{R} \big ) \, . \\
\end{aligned}
\end{equation}
Here, $y$ stands for a radial motion $y(x,t) =w(R,t)\f{x}{R}$,  $R=|x|$, $x \in\RR^3$, and
\eqref{RPDEINTRO} monitors the evolution of its amplitude $w(R,t)$.
A necessary condition for $y$ to represent a physically realizable motion is $\det F >0$ with $F = \gr y$.
In the radial case, it dictates
\begin{equation}\label{IMPINTRO}
w_R(w/R)^2>0 \, ,
\end{equation}
and is also a sufficient condition for avoiding interpenetration of matter.

The constitutive properties of hyperelastic materials are completely determined by the
stored energy function $W(F) : \mddplus{3} \to [0,\infty)$, which - due to frame indifference -
has to be invariant under rotations.
For isotropic elastic materials $W(F)=\Phi(v_1,v_2,v_2)$, where $\Phi$ is a symmetric function of
the principal stretches $v_1,v_2,v_3$ of $F$, see \cite{Tr}.
Convexity of the stored energy is, in general,  incompatible with certain physical requirements
and  is not a natural assumption. For instance, in order to avoid
interpenetration of matter the stored energy should increase without bound  as $\det F\to 0^+ $
so that compression of a finite volume down to a point would cost infinite energy.
This behavior is inconsistent with simultaneously requiring convexity and
invariance of the stored energy under rotations.
As an alternative, the assumption of polyconvexity \cite{ball77}
is often employed, which postulates that
$$
W (F) = \sigma (F, \cof F,\det F)
$$
with $\sigma$ a convex function of the null-Lagrangian vector $(F,\cof F,\det F)$,
and encompasses certain physically realistic models ({\it e.g.} \cite[Sec 4.9, 4.10]{Ciarlet}).
In this work, we employ a specific form of polyconvex stored energy,
\begin{equation}\label{STENERGYSPEC}
\begin{aligned}
&W(F)  =  \Phi(v_1,v_2,v_3)
\\
&= \phi(v_1) + \phi(v_2) + \phi(v_3)
          +g(v_2v_3) + g(v_1 v_3) + g (v_1 v_2) + h(v_1 v_2 v_3) \, ,
\end{aligned}
\end{equation}
where $\phi$, $g$ and $h$ are convex functions and
$h(\delta) \to +\infty$ as $\delta\to 0+$.\par\medskip

Equation \eqref{RPDEINTRO} may be recast as a system of inhomogeneous balance laws,
\begin{equation}\label{SYSTEMINTRO}
\begin{aligned}
       v_{t}
        &=\frac{1}{R^2} \pd{}{R}\BBR{R^{2} \pd{\Phi}{v_1} \big ( u,\frac{w}{R},\frac{w}{R} \big )}
         -\frac{1}{R}\sum_{i=2}^{3} \pd{\Phi}{v_i} \big (u,\frac{w}{R},\frac{w}{R} \big ) \, .
         \\
       u_{t} &= v_{R}
       \\
       w_t &= v \, ,
\end{aligned}
\end{equation}
where $u = w_R$, and $v=w_t$. The system admits the entropy-entropy flux pair
\begin{equation}\label{ENERGYINTRO}
R^2 \partial_t \BBR{ \f{v^2}{2} + \Phi \big ( u,\frac{w}{R},\frac{w}{R} \big ) }
- \partial_{R}  \BBR{ R^2 \, v \,  \frac{\del \Phi}{\del v_1} \big ( u,\frac{w}{R},\frac{w}{R} \big ) }
= 0 \, ,
\end{equation}
which expresses the conservation  of mechanical energy along smooth solutions. For polyconvex stored energies, the "entropy"
$$
\eta = \frac{1}{2} v^2 + \Phi \big ( u,\frac{w}{R},\frac{w}{R} \big )
$$
is not convex, what causes various difficulties in applying the
general theory of conservation laws. Nevertheless, for
three-dimensional elastodynamics, there are available nonlinear
transport identities for the null-Lagrangians \cite{Qn}, which
allow to view the equations of elasticity as constrained evolution
of an enlarged symmetrizable system \cite{Tz2, Dm} equipped with a
relative entropy identity \cite{LT}. The enlarged system suggests
a variational approximation scheme for polyconvex
elasticity that dissipates the mechanical energy \cite{Tz2},  and
which, in the one-dimensional case, produces entropy weak solutions
\cite{Tz}. Conceptually similar structures  are available in
models of electromagnetism leading to augmented symmetrizable
hyperbolic systems \cite{Brenier,Serre,Serre2}.

The above results do not take into account the constraint of positive determinant, necessary
to interpret $y$ as a physically realizable motion. In this article, we consider the equations of
radial elasticity  \eqref{RPDEINTRO} and proceed to devise a variational approximation
scheme that on one hand preserves the positivity of determinants \eqref{IMPINTRO}
and on the other produces a time-discretized variant  of entropy dissipation.
As in \cite{Tz2}, the scheme is based on transport identities for the
null-Lagrangians. Null-Lagrangians are potential energies $\Psi (v_1,v_2,v_3 ; R)$ for which the functional
\begin{equation}\label{ACTIONINTRO}
I[w] =  \int_0^1 \Psi \big ( \big (  w_R,\f{w}{R} , \f{w}{R} \big
) ; R \big ) \, dR
\end{equation}
has variational derivative zero. They satisfy
\begin{equation}\label{NLAGRINTRO}
- \partial_{R} \BBR{\Psi_{,1}} + R^{-1} \BBR{ \Psi_{,2} +
\Psi_{,3}}=0 \quad \text{for all functions} \;  w(R) \, .
\end{equation}
where  $\Psi_{,i} := \frac{\partial \Psi}{\partial v_i}$,
$i = 1,2,3$,  stands for the partial derivative. The null-Lagrangians are
computed to be the functions $v_1$, $v_1 v_2 R$, $v_1 v_3 R$ or $v_1 v_2 v_3 R^2$.
Along solutions of the dynamical problem, each null-Lagrangian satisfies  the transport identity
\begin{equation}\label{NLEVOLINTRO}
\partial_t \Psi = \partial_{R} \BBR{\Psi_{,1}\, v} \, ,
\end{equation}
with $\Psi$ and $\Psi_{,i}$ are evaluated at $\Gamma = \BBR{w_R, w/R, w/R, R}$. The identities
\eqref{NLEVOLINTRO} allow  to embed the system \eqref{SYSTEMINTRO} into the symmetrizable
first-order evolution \eqref{EXTSYSPOS} in Section \ref{extensionone}.


The enlarged system, in the form  \eqref{EXTSYSPOS},  cannot handle
the positivity of determinants constraint.
For this reason we follow an alternative strategy,
combining a change of
variables suggested in Ball \cite{Ba} (for the equilibrium
problem) with the idea of extensions based on null-Lagrangians, and carry out an
alternative extended system. We set $\rho=R^3$, $\alpha=w^3$,
$\beta = w_R/R^2$, $\gamma=w^2$ and let
\begin{equation}\label{XIDEFINTRO}
\Xi=\;\BBR{\beta \rho^{2/3}, \f{\alpha}{\rho},\f{\alpha}{\rho},
\f{\gamma} {\rho^{1/3}}, \f{3\gamma_{\rho}}{2}\rho^{2/3},
\f{3\gamma_{\rho}}{2}\rho^{2/3},\alpha_{\rho}\rho^{2/3}}.
\end{equation}
The second extension has four actual unknowns $v$, $\alpha$,
$\beta$ and $\gamma$, and is the
symmetrizable system listed in \eqref{EXPLSYS} of Section \ref{extensiontwo}
endowed with the entropy pair
\begin{equation}\label{EXTSYSPOSRHOINTRO}
\partial_t \BBR{ \f{v^2}{2} + G(\Xi) } - \partial_{\rho} \BBR{3 \rho^{2/3} \, G_{,i}(\Xi)
\, \Omega_{,1}^i(\Gamma) \,v}= 0 \ ,
\end{equation}
where $G$ is defined in \eqref{GDEF2} and is (assumed) convex and $\Gamma$
is as in \eqref{GAMMADEF2}.

The extended system \eqref{EXPLSYS} is discretized in time using
an implicit-explicit scheme. It is the Euler-Lagrange equations of  the variational
problem: given $v_0$ and $\Xi^0$ defined via $\alpha_0$, $\beta_0$
and $\gamma_0$ as in (\ref{XIDEFINTRO}), minimize
\begin{equation}\label{MININTRO}
I(\alpha,\beta,\gamma,v)=\int_{0}^{1} \, \f{1}{2}(v-v_0)^2 +
G(\Xi) \, d\rho
\end{equation}
over the set of admissible functions
\begin{eqnarray}
\label{ADMDEFINTRO}
\begin{aligned}
\mathcal{A}_{\lambda}= \Big \{ (\alpha,\beta,\gamma,v)\in X:\,
&\alpha(0) \geqslant 0,\,\alpha(1)=\lambda,\,\alpha'
>0 \;\tx{a.e.} \; and
\\
&I(\alpha,\beta,\gamma,v)<\infty,\,\f{\BBR{\beta  - \beta_0 }}{h}=3v',
\\
&\f{\BBR{\alpha - \alpha_0}}{h} = 3 {\alpha_0}^{2/3}v,\,
\f{\BBR{\gamma - \gamma_0}}{h} = 2 {\alpha_0}^{1/3}v \Big \}.
\end{aligned}
\end{eqnarray}
The differential constraints in \eqref{ADMDEFINTRO} are affine,
the condition $\alpha(1)=\lambda$ corresponds to the imposed
boundary condition $y(x)=\lambda x$, $x\in \partial \B$, while
$\alpha'>0$  secures the positivity of determinants (\ref{IMPINTRO}).
We prove the existence and uniqueness of a minimizer for the
functional $I$ over $\mathcal{A}_{\lambda}$ and that the
minimizer is a weak solution to the corresponding Euler-Lagrange
equations, that is, a solution of the time-discrete scheme.
The analysis of the minimization problem \eqref{MININTRO}-\eqref{ADMDEFINTRO}
uses direct methods of the calculus of variations, in the spirit of \cite{Ba}, with
the novel element of accounting for the evolutionary constraints in \eqref{ADMDEFINTRO}.

In continuum physics, weak solutions of a system of conservation
laws are required to satisfy entropy inequalities of the form
\begin{equation}\label{IRREVINEQ}
\partial_t \eta + \partial_{\alpha} q_{\alpha} \leq 0 \, .
\end{equation}
Such inequalities reflect irreversibility and
originate from the second law of thermodynamics. For instance,
admissible shocks of the elasticity equations are required to dissipate the mechanical
energy. Accordingly, approximating schemes are expected to respect such behaviors
and produce entropy dissipating solutions in the limit.
The variational scheme studied here turns out to satisfy
a discrete version of the entropy inequality
\begin{equation}\label{DISENTDISINTRO}
\f{ \BBR{\f{v^2}{2} + G(\Xi)} - \BBR{\f{{v_0}^2}{2} + G(\Xi^0)} }{h}
-\f{d}{d\rho} \BBR{3\rho^{2/3} G_{,i}(\Xi)\Omega^i_{,1}(\Gamma^0)v} \leqslant 0
\end{equation}
(see Section \ref{secvarapp}). In addition, the approximants satisfy $\alpha_{\rho}>0$
the transformed version  of \eqref{IMPINTRO}.
Finally, if the constructed approximants converge pointwise as the
time-step $h \to 0$,  then the limit will satisfy
the mechanical energy dissipation  inequality
\begin{equation}\label{EXTSYSPOSRHOINEQINTRO}
\partial_t \BBR{ \f{v^2}{2} + G(\Xi) } - \partial_{\rho} \BBR{3 \rho^{2/3} \, G_{,i}(\Xi)
\, \Omega_{,1}^i(\Gamma) \,v} \leqslant 0.
\end{equation}

The paper is organized as follows. In Section \ref{secprel} we outline the derivation of
the equations of radial elasticity and list various mechanical considerations relevant to
this work.
Section \ref{secnullag} contains a discussion of null-Lagrangians and the properties of the two
symmetrizable extensions of \eqref{SYSTEMINTRO}  pursued.
Section \ref{secvarapp} introduces the time-discrete scheme and its relation to a variational problem.
In Section \ref{seceximin} we consider the minimization problem \eqref{MININTRO}
and prove Theorems \ref{EXISTENCE} and \ref{UNIQUENESS} regarding existence
and uniqueness of minimizers. The Euler-Lagrange equations associated to the minimization
problem are derived in Theorem \ref{ELWEAK} of Section \ref{seceullag}, and the regularity of minimizers
is discussed in Section \ref{secreg}. The fact that minimizers satisfy the time-discretized
version of the entropy dissipation inequality \eqref{DISENTDISINTRO} is proved in
Section \ref{secvarapp}.

%

\section{Preliminaries }
\label{secprel}

We consider the equations of nonlinear elasticity
\begin{eqnarray}
\label{MPDE}
\left\{
\begin{aligned}
y_{tt} &=\mathrm{div} S(\nabla y)     \qquad  \text{in   $\B \times (0,\infty)$} \\
y(x,t) &=\lambda x, \qquad \qquad \text{  on  $\partial\B \times [0,\infty)$} \\
&\det{\gr{y}} >0, \qquad (x,t) \in \B \times [0,\infty)\\
\end{aligned}
\right.
\end{eqnarray}
on the unit ball $\B= \{x \in \RR^n \colon|x|<1\}$, subject to uniform stretching at the boundary
and initial conditions
\begin{equation}
y(x,0) = y_0(x) \, , \; y_t (x,0) = v_0(x) \, ,  \quad x \in \B \, .
\end{equation}
In order for the geometric mapping  $y:\B \times [0,\infty) \to \RR^n $
to correspond to a physically realizable {\emph{motion}} we have to exclude
interpenetration of matter. As a minimum requirement the condition
$\det{\gr{y}}>0$ is imposed.

Let $\mdd{n}$ be the real ${n\times n}$ matrices,
$\mddplus{n}=\{F \in \mdd{n} \colon \det F >0 \}$, and
let $SO(n)$ denote the set of proper rotations.
The Piola-Kirchhoff stress is a mapping $S:\mddplus{n} \to \mdd{n}$
and for {\emph{hyperelastic}} materials it is defined by the formula
\begin{equation}
S(F)=\partial W(F)/\partial F.
\end{equation}
where  $W:\mddplus{n} \to \RR^n$ is the stored-energy function of the elastic body.

We assume that the stored energy function $W$ satisfies the physical requirement of
{\emph{frame-indifference}} and that the elastic material is {\emph{isotropic}}. Then,
    \begin{equation}\label{FRISP}
        W(QF) =   W(F)  = W(FQ)  \quad \forall F \in \mddplus{n} , \; Q \in SO(n)
    \end{equation}
and (see Truesdell and Noll \cite[pp 28, 317]{Tr}) there exists
a  symmetric function
$$
\Phi : \RR_{+}^n = \{ x \in \RR^n :  x_i > 0 \; \forall i \}  \to \RR
$$
such that
\begin{equation}
      W(F)=\Phi (v_1,\dots,v_n) \quad \quad \forall  F \in \mddplus{n},
\end{equation}
where $v_1,\dots,v_n$ are the singular values of $F$, i.e.
the eigenvalues of $(F\trp F )^{1/2}$. We note
 that the symmetry of $\Phi$ implies
\begin{equation}\label{PHIDERPROP}
\pd{\Phi}{v_i}(a,b,\dots,b)=\pd{\Phi}{v_j}(a,b,\dots,b),\;\;\;i,j\geqslant2,\;
a,b\in \RR_+.
\end{equation}
It is easy to check that for hyperelastic, isotropic materials, frame-indifference implies
\begin{equation}\label{PKP}
        S(QFQ\trp)=QS(F)Q\trp  \, , \quad \mbox{for all} \;  Q \in
        SO(n).
\end{equation}

\subsection{Radial Elasticity}

A function $f:\B \backslash \{0\} \to \RR^n$ is called {\emph{radial}} if
\begin{equation*}
f(x)= w(R)\f{x}{R}, \txc{}  R=|x|,
\end{equation*}
where $w:[0,\infty) \to [0,\infty)$.
The space of {\emph{deformations} of $\B$ is denoted by
\begin{equation*}
    \tx{Def}^{\;p}(\B)=\BBF{f \in W_{1}^{p}\BBR{\B,\RR^n} \txc{:} \tx{det}{\gr{f}} > 0 \txc{a.e.}}.
\end{equation*}

\begin{lmm}[J. Ball \cite{Ba}]\label{radgrad}
Let $f$ be a radial function. 
Then $f \in \tx{Def}^{\;p}(\B)$ if and only if $w$ is absolutely
continuous on $(0,1)$ and satisfies $w_R(w/R)^{n-1}>0$ almost everywhere, and
\begin{equation*}
    \int\limits_{0}^{1}\BBR{\BBA{w'}^p + \BBA{w/R}^p }R^{n-1}dR \: < \:
    \infty.
\end{equation*}
In this case the weak derivatives of $f$ are given
by
\begin{equation*}
    \nabla f = \frac{w}{R}{\mathbf I}+\BBR{w' -
    \frac{w}{R}}\frac{x\otimes x}{R^2} \txq{a.e.} x \in \B .
\end{equation*}
\end{lmm}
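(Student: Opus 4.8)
The plan is to compute the distributional gradient of $f(x) = w(R)\,x/R$ directly, first assuming $w$ is smooth (say $C^1$ on $(0,1)$) to motivate the formula, and then promote everything to the Sobolev setting by a density/truncation argument. For smooth $w$, one differentiates componentwise: $\partial_{x_j} f_i = \partial_{x_j}\bigl(w(R)\,x_i/R\bigr)$. Using $\partial_{x_j} R = x_j/R$, the product and quotient rules give
\begin{equation*}
\nabla f = \frac{w}{R}\,\mathbf{I} + \Bigl(w' - \frac{w}{R}\Bigr)\frac{x\otimes x}{R^2},
\end{equation*}
which is the claimed formula. Since $x/R$ is a unit vector, $(x\otimes x)/R^2$ is the rank-one orthogonal projection onto the radial direction, so $\nabla f$ has eigenvalue $w'$ in the radial direction and eigenvalue $w/R$ (with multiplicity $n-1$) in the tangential directions; hence its singular values are $|w'|$ and $|w/R|$, and $\det \nabla f = w'\,(w/R)^{n-1}$. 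This immediately identifies the condition $\det\nabla f > 0$ a.e. with $w'(w/R)^{n-1} > 0$ a.e., and shows
\begin{equation*}
|\nabla f|^p \sim |w'|^p + |w/R|^p
\end{equation*}
up to dimensional constants, so that the $W^{1,p}(\B,\RR^n)$ norm of $f$ is comparable to $\int_0^1 \bigl(|w'|^p + |w/R|^p\bigr) R^{n-1}\,dR$ after passing to polar coordinates (the Jacobian of the polar change of variables supplies the $R^{n-1}$ weight). This handles the norm equivalence and the determinant condition once weak differentiability is established.

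The substantive point is the equivalence of weak differentiability of $f$ on $\B$ with absolute continuity of $w$ on $(0,1)$ together with the stated integrability. For the ``only if'' direction: if $f \in W^{1,p}(\B,\RR^n)$, one recovers $w$ by restricting $f$ to a ray, e.g. $w(R) = f(Re_1)\cdot e_1$; by the ACL (absolutely-continuous-on-lines) characterization of Sobolev functions, $f$ is absolutely continuous on almost every line parallel to $e_1$, but one needs this for the \emph{specific} radial structure. Here I would use that $f$ being radial forces its trace on every sphere $\{|x|=R\}$ to be $w(R)$ times a fixed vector field, so the one-dimensional function $w$ inherits absolute continuity from Fubini/ACL applied in polar coordinates, and the integrability bound on $w',\,w/R$ is exactly the finiteness of $\|\nabla f\|_{L^p}$ re-expressed in polar coordinates. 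For the ``if'' direction: given $w$ absolutely continuous with the integrability, define $f = w(R)x/R$, verify $f \in L^p$, and check that the pointwise-a.e. matrix field displayed above is the distributional gradient by testing against $\varphi \in C_c^\infty(\B,\RR^{n\times n})$ and integrating by parts in polar coordinates — the boundary term at $R=0$ vanishes because $|f| = |w| \le \int_0^R |w'| \to 0$ controls the behavior near the origin and the surface area of small spheres is $O(R^{n-1})$, while absolute continuity of $w$ legitimizes the one-dimensional integration by parts in $R$.

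The main obstacle is the careful treatment near the origin $R=0$: the punctured ball $\B\setminus\{0\}$ is where the radial structure lives, and one must confirm that no distributional mass concentrates at the single point $\{0\}$ (this is automatic in dimension $n \ge 2$ since points have zero $W^{1,p}$-capacity for $p \le n$, but for $p > n$ or to be safe one argues directly that $f$ is continuous up to $0$ with $f(0)=0$ provided $w/R \in L^p(R^{n-1}dR)$ forces $w(R)\to 0$). A secondary technical care is the passage between the polar-coordinate integration-by-parts identity and the Cartesian distributional definition of the gradient, which requires that test matrix fields pull back correctly and that the angular integrations introduce no spurious terms — here the identity $\mathrm{div}(x/R^{?}) $ computations and the fact that the tangential part of $\nabla f$ is a pure multiple of the projection $\mathbf{I} - (x\otimes x)/R^2$ keep the bookkeeping manageable. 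I expect the bulk of the proof to be this bookkeeping, with the conceptual content entirely contained in the eigenvalue/singular-value decomposition of the displayed gradient formula.
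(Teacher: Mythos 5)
The paper does not prove this lemma; it is quoted verbatim from Ball's 1982 cavitation paper with the attribution ``[J. Ball]'', so there is no internal proof to compare against. Your sketch follows what one would expect: compute $\nabla f$ explicitly, read off the eigenvalues $w'$ (radial) and $w/R$ (tangential, multiplicity $n-1$), pass to polar coordinates to get the weighted integrability condition, and check that the pointwise gradient is the distributional gradient by integrating by parts on $\B\setminus\{0\}$ and controlling the inner boundary term.

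There is one genuine error, and it sits exactly at the point Ball's lemma was designed to handle. You justify the vanishing of the inner boundary term by the bound
\[
|f| = |w| \le \int_0^R |w'| \to 0,
\]
and elsewhere by the claim that $w/R \in L^p(R^{n-1}dR)$ ``forces $w(R)\to 0$.'' Both silently assume $w(0^+)=0$. But for $1<p<n$ the function $w\equiv 1$ has $w/R = R^{-1}$ and $\int_0^1 R^{-p}R^{n-1}\,dR <\infty$, yet $w(0^+)=1\neq 0$; more to the point, the deformations with $w(0^+)>0$ are precisely the cavitating maps that are the raison d'\^etre of Ball's result, and of this lemma's appearance in the present paper (it is the reason one works on $(0,1)$ rather than $[0,1)$). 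So the direct boundary-term estimate as you wrote it fails in exactly the interesting regime. The repair does not need $w\to 0$ at all: since membership in $\text{Def}^p$ gives $w'(w/R)^{n-1}>0$ a.e.\ and $w\ge 0$, one has $w'>0$ a.e., hence $w$ is increasing and in particular bounded near $R=0$ by, say, $w(1/2)$; the boundary term on the sphere of radius $\eps$ is then $O(\eps^{\,n-1})\sup_{(0,\eps)}w\to 0$ purely from the surface-area factor. (Your separate capacity remark --- points have zero $W^{1,p}$-capacity for $p\le n$ --- is a correct alternative for that range of $p$, but it sits uneasily next to the incorrect direct bound, and you still need some control on $|w|$ near $0$ to know $f\in L^p(\B)$.) The rest of the outline --- the ACL/Fubini passage for the ``only if'' direction, the equivalence $|\nabla f|^p\sim |w'|^p+|w/R|^p$, the reading-off of $\det\nabla f$ --- is sound.
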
 \par\medskip

Our next goal is to consider the problem \eqref{MPDE} and to recast it for
radial motions
\begin{equation}
\label{radmotion}
        y(x,t)=w(R,t) \frac{x}{R} \quad \mbox{for} \quad x\neq 0,
\end{equation}
where $w: [0,1) \times [0,\infty) \to \RR$ satisfies
$w(R,t)\geqslant 0$.  Lemma \ref{radgrad} implies
\begin{equation}
    \nabla y = \frac{w}{R}{\mathbf I}+\BBR{w_R -
    \frac{w}{R}}\frac{x\otimes x}{R^2} \txq{a.e.} x \in \B
\end{equation} \\
and hence the eigenvalues of $\gr{y}$ are expressed as
\begin{equation*}
    v_1=w_R,\: v_2 = ... = v_n =w/R \, .
\end{equation*}
The requirement
\begin{equation}
\det{\gr{y}}=w_R(w/R)^{n-1}>0
\end{equation}
dictates $w_R,\f{w}{R}>0$.
Since $\nabla y$ is symmetric and positive definite, the singular values of $\gr{y}$
coincide with its eigenvalues, the stored energy takes the form
$$
W(\gr{y}) = \Phi\BBR{w_R,\f{w}{R}, ... , \f{w}{R}} \, ,
$$
and property (\ref{PKP}) implies that the Piola-Kirchhoff
stress can be expressed as (see e.g. J.Ball \cite{Ba})
\begin{equation*}
\begin{split}
        S(\gr{y})&=\Phi_{,2}\BBR{w_R,w/R,\dots,w/R}{\mathbf I}+{} \\
        &\BBS{\Phi_{,1}\BBR{w_R,w/R,\dots,w/R} -\Phi_{,2}\BBR{w_R,w/R,\dots,w/R}} \frac{x \otimes
        x}{R^2} \, ,
\end{split}
\end{equation*}
where $\Phi_{, j} := \frac{\partial \Phi}{\partial v_j}$, $j=1,2,3$.
For radial motions, the system (\ref{MPDE}) then takes the form,
\begin{equation}\label{RPDE}
\begin{aligned}
       R^{n-1} \partial_{tt} w
        &=  \pd{}{R}\BBR{R^{n-1}\Phi_{,1}(w_R,\ldots,w/R)}
         -R^{n-2} \sum_{i=2}^{n}\Phi_{,i}(w_R,\dots,w/R)
         \\
       w(1, t )&=\lambda, \quad  w_R\BBR{w/R}^{n-1} >0, \;\; (R,t) \in (0,1)\times
        [0,\infty),
\end{aligned}
\end{equation}
of a second order equation describing the evolution of $w(R,t)$ subject to the constraint
\eqref{RPDE}$_2$. The latter expresses the requirement that matter cannot interpenetrate unto itself.


\subsection{Polyconvex Stored Energy for $n=3$}

From now on we fix the number of dimensions to $n=3$ and
assume that the stored energy $W:\mddplus{3} \to \RR^3$ is {\bf
\emph{polyconvex}}, that is
\begin{equation*}
W(F)=\Bar{\Bar G}\BBR{F,\cof F,\det F}
\end{equation*} for some \emph{convex}
function $\Bar{\Bar G}:\mddplus{3} \times \mddplus{3} \times
\RR_{+} \to \RR$.

By the polar decomposition theorem any matrix $F \in \mddplus{3}$ is expressed in the form
$F = R U$ with $R \in SO(3)$ and $U = + \sqrt{F^T F}$. Further, $U = Q^T diag ( v_1, v_2, v_3) Q$ where
$Q$ is the orthogonal matrix of eigenvectors and $v_1,v_2,v_3$ are the eigenvalues of $U$.
The properties (\ref{FRISP}) of isotropy and frame-indifference imply
\begin{equation*}
\begin{aligned}
 W(F) &=\Bar{\Bar G}\BBR{\diagmtx{v_1}{v_2}{v_3},\diagmtx{v_2v_3}{v_1 v_3}{v_1 v_2}, v_1 v_2 v_3}  \\
 &=: \Bar{G}\BBR{v_1,v_2,v_3,v_2 v_3,v_1 v_3, v_1 v_2, v_1 v_2 v_3}
 \end{aligned}
\end{equation*}
where $\Bar{G} (\Xi)$ is a convex function of
$\Xi=(\xi)_{i=1\dots 7} \in \RR^7$.

For radial motions the singular values are $v_1 = w_R$, $v_2 = v_3 = \frac{w}{R}$.
For reasons related to the null-Lagrangian structure of an associated variational problem
(outlined in the following section)
the stored energy will be expressed in the form
\begin{equation}\label{PXENERGY}
\begin{aligned}
W(\gr{y}) &= \Phi\BBR{w_R,\f{w}{R},\f{w}{R}}
\\
&=  \Bar{G}\BBR{w_R,\f{w}{R},\f{w}{R}, \big (\f{w}{R} \big )^2 ,
w_R \big ( \f{w}{R} \big ) , w_R\big ( \f{w}{R} \big ) , w_R\big (\f{w}{R} \big )^2 }
\\
&=  G\BBR{ \Omega\BBR{ \BBR{w_R,\f{w}{R},\f{w}{R}} ;R }  ;  R  }
\end{aligned}
\end{equation}
where $\Omega$ and $G$ are inhomogeneous functions defined by
\begin{align}
\label{OMEGADEF}
\Omega(V;R) &:=\BBR{v_1,v_2,v_3,v_2v_3R,v_1v_3R,v_1v_2R,v_1v_2v_3R^2} \, ,
\\
\label{GDEF}
G(\Xi;R) &:=\Bar{G}\BBR{\xi_1,\xi_2,\xi_3,\xi_4/R,\xi_5/R,\xi_6/R,\xi_7/R^2} \, ,
\end{align}
$V=(v_i)_{i=1 \dots 3}\in \RR^3$ and $\Xi=(\xi)_{i=1
\dots 7} \in \RR^7$.
The convexity hypothesis on $\Bar{\Bar{G}}$ implies that  $G(\Xi ; R)$
is convex as a function of $\Xi \in \RR^7$.
In summary,
\begin{align}
\label{COMP}
W(\gr{y}) = \Phi\BBR{w_R,\f{w}{R},\f{w}{R}} &= G(\Omega(\Gamma;R);R),
\\
\text{where } \quad
\label{GAMMADEF}
\Gamma &= \BBR{w_R,\f{w}{R},\f{w}{R}} \, .
\end{align}
For simplicity of notation, we henceforth suppress the dependence on $R$ and write
$\Omega(V)=\Omega(V;R)$ and $G(\Xi)=G(\Xi;R)$.

Equation \eqref{RPDE} can be expressed in the form
\begin{equation}\label{3rpde}
\begin{aligned}
        R^{2}\partial_{t } v
        &=\pd{}{R}\BBR{  R^{2}\Phi_{,1} \big (  w_{R} ,\f{w}{R} , \f{w}{R} \big ) }
         -R \BBR{ \Phi_{,2} + \Phi_{,3} }  \big (  w_{R} ,\f{w}{R} , \f{w}{R} \big ) \, .
         \\
         \del_t w &= v \, ,
 \end{aligned}
\end{equation}
The latter
formally satisfies the conservation of mechanical energy identity
\begin{equation}
\label{ENLENR}
\begin{aligned}
\partial_t \Big (   R^{2}  \big ( \f{v^2}{2} + \Phi  \BBR{w_R,w/R, w/R}   \big ) \Big )
 = \partial_{R} \BBR{ R^{2} v  \, \Phi_{,1}\BBR{w_R,w/R,w/R} } \, .
 \end{aligned}
\end{equation}
The mechanical energy and the associated energy flux provide an entropy-entropy flux pair for \eqref{3rpde} but the
entropy is not in general convex.
Using \eqref{COMP}-\eqref{GAMMADEF},  the derivatives
$\Phi_{, j}$ are expressed as
\begin{equation*}
\Phi_{,j}(v_1,v_2,v_3)  = \f{\partial}{\partial v_j}
G(\Omega(V))=\pd{G}{\xi_i}(\Omega(V)) \pd{\Omega^i}{v_j}(V),
\end{equation*}
and \eqref{3rpde}$_1$ is written as
\begin{equation}
\label{PXPDE}
\begin{aligned}
R^2\,\partial_t v &=  \;
\partial_{R} \BBR{R^2 \pd{G}{\xi_i}(\Omega(\Gamma)) \pd{\Omega^i}{v_1}(\Gamma) }\\
& \qquad - R \, \pd{G}{\xi_i}(\Omega(\Gamma)) \BBR{\pd{\Omega^i}{v_2}(\Gamma) + \pd{\Omega^i}{v_3}(\Gamma)} \, .
\end{aligned}
\end{equation}


\section{Null-Lagrangians and extensions of polyconvex radial elasticity}
\label{secnullag}

\subsection{Null-Lagrangians}
An alternative approach to derive \eqref{ENLENR} proceeds
by considering the extrema of the action functional
$$
J[y] = \int_0^T \int_0^1 \BBR{  \f{1}{2} w_t^2 - \Phi \big (  w_R,\f{w}{R} , \f{w}{R} \big ) } R^2 \, dR dt
$$
and deriving \eqref{RPDE} (for $n=3$)  as the associated Euler-Lagrange equations. This provides
a connection with the calculus of variations.

Consider the functional associated to the equilibrium problem
\begin{equation*}
I[w]=\int\limits_{0}^{1} \Psi \BBR{w_R,w/R,w/R \, ;  R}dR \, .
\end{equation*}
We ask for which integrands $\Psi\BBR{v_1,v_2,v_3;R}:\RR^4 \to \RR$
the functional $I$ admits zero variational
derivatives,  $\frac{\delta I}{\delta w} = 0$;
such integrands are called {\it null Lagrangians} and they satisfy
the Euler-Lagrange equation
\begin{equation}\label{NLAGR}
- \partial_{R} \BBR{\Psi_{,1}} + R^{-1} \BBR{ \Psi_{,2} +
\Psi_{,3}}=0 \quad \text{for all functions} \;  w(R) \, .
\end{equation}
If $w = w(R,t)$ also depends on time, the evolution of a null Lagrangian $\Psi$ is described by
\begin{equation}\label{NLEVOL}
\partial_t \Psi = \partial_{R} \BBR{\Psi_{,1}\,
\partial_t w}.
\end{equation}
where $\Psi$ and $\Psi_{,i}$ are evaluated
at $\BBR{w_R, w/R, w/R, R}$.
\par\bigskip

It is easily verified that $\Psi(v_1,v_2,v_3 ; R)$ selected  by
\begin{equation*}
v_1, \;\;\; v_1 v_2 R, \;\;\; v_1 v_3 R,   \txq{or} v_1 v_2 v_3
R^2
\end{equation*}
are null-Lagrangians. Applying
(\ref{NLAGR}) to $ \Omega^i$, $i=1,5,6,7$,  defined by
\eqref{OMEGADEF} we get
\begin{equation}\label{NLAGROMEGA}
-\partial_{R} \BBR{ \Omega^{i}_{,1}(\Gamma)} + R^{-1} \BBR{
\Omega^i_{,2}(\Gamma) + \Omega^i_{,3}(\Gamma)}=0, \;\;\;\;
i=1,5,6,7,
\end{equation}
with $\Gamma = (w_R , w/R, w/R)$ defined by (\ref{GAMMADEF}).
\par\bigskip

\subsection{A symmetrizable extension} \label{extensionone}
The null-Lagrangian structure is used in \cite{Tz2} to embed the equations of 3-d elastodynamics
to a hyperbolic system endowed with a convex entropy, and to construct a variational approximation scheme
for the problem. We follow this procedure in order to achieve an augmented system for
radial elastodynamics. The evolution in time of
\begin{equation}
\Omega(\Gamma)=\BBR{w_R,w/R,w/R,w^2/R,w_R w,w_R w,w_R w^2}
\end{equation}
gives
\begin{equation}\label{OMEGAEVOL}
\begin{aligned}
\partial_t \, \Omega^1(\Gamma) &= \partial_t \BBR{w_R}    = \partial_R v    = \partial_R \BBR{\Omega^1_{,1}(\Gamma)v}\\
\partial_t \, \Omega^i(\Gamma) &= \partial_t \BBR{w/R}    = v/R             = R^{-1} \BBR{\Omega^i_{,2}(\Gamma)+\Omega^i_{,3}(\Gamma)}v \quad \text{ for $i=2,3$}
\\
\partial_t \, \Omega^4(\Gamma) &= \partial_t \BBR{w^2/R}  = 2(w/R)v         = R^{-1} \BBR{\Omega^4_{,2}(\Gamma)+\Omega^4_{,3}(\Gamma)}v \\
\partial_t \, \Omega^i(\Gamma) &= \partial_t \BBR{w_Rw}   = \partial_R(wv)  = \partial_R \BBR{\Omega^i_{,1}(\Gamma)v}  \qquad \text{ for $i=5,6$}
\\
\partial_t \, \Omega^7(\Gamma) &= \partial_t \BBR{w_Rw^2} = \partial_R(w^2v)= \partial_R \BBR{\Omega^7_{,1}(\Gamma)v}.\\
\end{aligned}
\end{equation}
Note that (\ref{OMEGAEVOL})$_{1,5,6,7}$ are precisely the equations (\ref{NLEVOL}) describing
the evolution of null Lagrangians. By contrast, (\ref{OMEGAEVOL})$_{2,3,4}$ describe the evolution of
lower-order terms and do not have the structure of (\ref{NLEVOL}).
\par\medskip

Equations \eqref{OMEGAEVOL} and \eqref{PXPDE}
motivate an extension of radial elasticity :
\begin{align}
\label{EXTSYSPOS}
&\left\{
\begin{aligned}
R^2\partial_t v &= \partial_{R} \BBR{R^2 \pd{G}{\xi_i}(\Xi)
\pd{\Omega^i}{v_1}(\xi) }- R\pd{G}{\xi_i}(\Xi)
\BBR{\pd{\Omega^i}{v_2}(\xi)
 +\pd{\Omega^i}{v_3}(\xi)}\\
\partial_t \xi_i &= \partial_R \BBR{\Omega^i_{,1}(\xi)v }
\;\;\;i=1,5,6,7\\
\partial_t \xi_i &= R^{-1}
\BBR{\Omega^i_{,2}(\xi)+\Omega^i_{,3}(\xi)}v \;\;\; i=2,3,4
\\
\xi_1 &= \del_R ( R \xi_2 ) \, , \quad \xi_2 = \xi_3
\end{aligned}
\right.
\\
\label{extsysconst}
&\qquad  \xi_2(1) =\xi_3(1)=\lambda,\; \xi_{2},\xi_3 \geqslant 0, \;\xi_7 >
0, \;\; (R,t) \in (0,1) \times [0,\infty),
\end{align}
System \eqref{EXTSYSPOS} describes the evolution of the vector $(v,\Xi)$,  where
 $\Xi \in \RR^7$ and $\xi = ( \xi_1, \xi_2, \xi_3 )$ are the first three components of $\Xi$.
 \par\medskip

 The extension has the following properties:

 \medskip
 \noindent
(i)  The constraint \eqref{EXTSYSPOS}$_4$ enforces that $\xi$ is of the form
$\xi = (w_R , w/R, w/R)$ for some function $w(R,t)$
(similarly to  $\Gamma$ in \eqref{GAMMADEF}).
Moreover, \eqref{EXTSYSPOS}$_4$ is an involution: if it is satisfied
for the initial data, the constraint is propagated and is satisfied for all times.

\smallskip
\noindent
(ii) If $\Xi(\cdot,0)=\Omega(\Gamma^0)$ where
$\Gamma^0=\BBR{f',f/R,f/R}$ for some $f=f(R)$, then $\Xi(R,t)$  retains the same format
for all times, {\it i.e.} there exists $w$ such that $\Xi=\Omega(\Gamma)$ where
$\Gamma=(w_R,w/R,w/R)$.  In other
words, radial elasticity (\ref{3rpde}) can be viewed as a
constrained evolution of \eqref{EXTSYSPOS}.

\smallskip
\noindent
(iii) The enlarged system admits an entropy pair
\begin{equation}\label{EXTENTRPOS}
\partial_t \BBR{ R^2 \BBR{ \f{v^2}{2} + G(\Xi) }} - \partial_{R} \BBR{R^2 \, \pd{G}{\xi_i}(\Xi)
\, \pd{\Omega^i}{v_1}(Z) \,v}= 0 \, ,
\end{equation} with strictly convex entropy
\begin{equation}
\eta(v,\Xi)=\f{v^2}{2} + G(\Xi).
\end{equation}
Let us remark that $\eta$ is not an entropy in the usual sense of
the theory of conservation laws: the identity
(\ref{EXTENTRPOS}) is based on the constraint
(\ref{EXTSYSPOS})$_4$ together with the property (\ref{NLAGROMEGA}) of null
Lagrangians.

\subsection{An alternative extension with a convex entropy}\label{extensiontwo}

System \eqref{EXTSYSPOS} provides an extension of radial
elasticity that is endowed with a convex entropy. Concerning the
objective of achieving a variational approximation, it has the
drawback that the constraint \eqref{extsysconst} of positivity
for the variables $\xi_2,\xi_3$ and $\xi_7$ is not preserved at the level of
time-step approximations. Although one can
control the positivity of $\xi_7$ (the augmented variable standing for the determinant),
it is not possible to control the positivity of $\xi_2$,$\xi_3$.
There are also difficulties in proving that minimizers satisfy the
corresponding Euler-Lagrange equations, the time-discretized
system associated to \eqref{EXTSYSPOS}.

For this reason, we develop an alternative extension by combining
the evolution of null Lagrangians with a change of variables used
in  Ball \cite{Ba} for the equilibrium problem. This extension
induces a variational approximation scheme that preserves the
positivity of determinants.

The stored energy $\Phi$ is expressed in the form
\begin{equation}
\label{storeden1}
\begin{split}
 \Phi\BBR{v_1,v_2,v_3}
&=
\Bar{G}\BBR{ v_1,v_2,v_3,v_2v_3,v_1v_3,v_1v_2, v_1v_2v_3}
\\
&=  G(\Omega(V ; \rho) \, ; \, \rho)
\end{split}
\end{equation}
where $\Omega$ and $G$ are nonhomogeneous functions of $\rho$ that are redefined so that
\begin{align}
\label{OMEGADEF2}
\Omega(V;\rho) &:=
\BBR{v_1,v_2^3,v_3^3,v_2v_3\rho^{1/3},v_1v_3\rho^{1/3},v_1v_2\rho^{1/3},v_1v_2v_3\rho^{2/3}}
\\
\label{GDEF2}
G(\Xi;\rho) &:=
\Bar{G}\BBR{\xi_1,\xi_2^{1/3},\xi_3^{1/3},\xi_4/\rho^{1/3},\xi_5/\rho^{1/3},\xi_6/\rho^{1/3},\xi_7/\rho^{2/3}}.
\end{align}
It is now assumed that $G(\Xi ; \rho)$ is a convex function of $\Xi$; this is a somewhat stronger
hypothesis than polyconvexity (which is convexity of $\bar G$) because of the definition of
$\Omega^i (V ; \rho)$, $i=2,3$, in \eqref{OMEGADEF2}. In the sequel  any explicit $\rho$-dependence will
be suppressed.

\subsubsection{A change of variables}
Following \cite{Ba} we perform the change of variables
\begin{equation}
\rho=R^3 \txq{and} \alpha=w^3.
\end{equation}
Then $\Gamma = (w_R , w/R, w/R )$ is expressed as
\begin{equation}\label{GAMMADEF2}
\Gamma =
(\alpha_{\rho}(\rho/\alpha)^{2/3},(\alpha/\rho)^{1/3},(\alpha/\rho)^{1/3})
\end{equation}
and the stored energy reads
\begin{equation}
\begin{split}
&W(\gr{y}) = \Phi\BBR{\alpha_{\rho}(\rho/\alpha)^{2/3},(\alpha/\rho)^{1/3},(\alpha/\rho)^{1/3}}\\
&=  G(\Omega(\Gamma ; \rho) \, ; \, \rho)
\end{split}
\end{equation}
where $\Omega$ and $G$ are defined in \eqref{OMEGADEF2}, \eqref{GDEF2},
and  $G(\cdot ; \rho)$ is convex.

The system \eqref{3rpde} takes the form
\begin{equation}\label{PXPDE2}
\left\{
\begin{aligned}
\partial_t v &=
\partial_{\rho} \BBR{3\rho^{2/3} \pd{G}{\xi_i}(\Omega(\Gamma)) \pd{\Omega^i}{v_1}(\Gamma) }\\
& - \rho^{-1/3} \, \pd{G}{\xi_i}(\Omega(\Gamma)) \BBR{\pd{\Omega^i}{v_2}(\Gamma) + \pd{\Omega^i}{v_3}(\Gamma)}\\
\partial_t (\alpha^{1/3}) &=v\\
\alpha(1)&=\lambda,\;\alpha \geqslant 0, \; \alpha_{\rho} > 0,
\;\; (R,t) \in (0,1) \times [0,\infty).
\end{aligned}\right.
\end{equation}
with the last inequalities encoding the constraints for solutions to represent elastic
motions.
In the new variables, by \eqref{OMEGADEF2},
\begin{equation}
\Omega(\Gamma)=\BBR{\f{\alpha_{\rho}}{\alpha^{2/3}}\rho^{2/3},\f{\alpha}{\rho},\f{\alpha}{\rho},\f{\alpha^{2/3}}{{\rho}^{1/3}},\f{\alpha_{\rho}}{\alpha^{1/3}}\rho^{2/3},\f{\alpha_{\rho}}{\alpha^{1/3}}\rho^{2/3},\alpha_{\rho}\rho^{2/3}}
\end{equation}
and, using  \eqref{PXPDE2}$_2$, we compute
\begin{equation}\label{OMEGAEVOL2}
\begin{aligned}
\partial_t \, \Omega^1(\Gamma) &= \partial_t \BBR{3\rho^{2/3}\partial_{\rho}(\alpha^{1/3})} = 3\rho^{2/3}\partial_{\rho}v\\
\partial_t \, \Omega^i(\Gamma) &= \partial_t \BBR{\alpha/\rho} = 3\alpha^{2/3}v/\rho
\qquad i=2,3  \\
\partial_t \, \Omega^4(\Gamma) &= \partial_t \BBR{\alpha^{2/3}/{\rho}^{1/3}} = 2\alpha^{1/3}v/\rho^{1/3}\\
\partial_t \, \Omega^i (\Gamma) &= \partial_t \BBR{(3/2)\rho^{2/3}\partial_{\rho}(\alpha^{2/3})} = 3\rho^{2/3}\partial_{\rho}\BBR{\alpha^{1/3}v}
\quad i=5,6 \\
\partial_t \, \Omega^7(\Gamma) &= \partial_t \BBR{\alpha_{\rho}\rho^{2/3}} = 3\rho^{2/3}\partial_{\rho}(\alpha^{2/3}v) \, . \\
\end{aligned}
\end{equation}\\
These identities are summarized in two groups
\begin{equation}\label{OMEGAEVOL2SHORT}
\begin{aligned}
\partial_t \Omega^i(\Gamma) &=
3\rho^{2/3}\partial_{\rho}(\Omega^i_{,1}(\Gamma)v), \;\;\;
i=1,5,6,7 \, , \\
\partial_t \Omega^i(\Gamma) &=
\rho^{-1/3}(\Omega^i_{,2}(\Gamma)+\Omega^i_{,3}(\Gamma))v \, ,
\;\;\;i=2,3,4 \, ,
\end{aligned}
\end{equation}
the former representing the evolution of null-Lagrangians and the latter the evolution of
lower order terms. The identities (\ref{NLAGROMEGA}) satisfied by null-Lagrangians
become
\begin{equation}\label{NLAGROMEGA2}
-3\rho^{2/3} \partial_{\rho} \BBR{ \Omega^{i}_{,1}(\Gamma)} +
{\rho}^{-1/3} \BBR{ \Omega^i_{,2}(\Gamma) +
\Omega^i_{,3}(\Gamma)}=0, \;\;\;\; i=1,5,6,7.
\end{equation}
\par\bigskip

\subsubsection{The augmented system}

Next, consider the augmented system
\begin{equation}\label{EXTSYS}
\left\{
\begin{aligned}
\partial_t v &= \partial_{\rho} \BBR{3\rho^{2/3}
G_{,i}(\Xi)\Omega^i_{,1}(\Gamma) }-\rho^{-1/3}G_{,i}(\Xi)
\BBR{\Omega^i_{,2}(\Gamma)
 +\Omega^i_{,3}(\Gamma)}\\
 \partial_t \alpha^{1/3} &= v \\
\partial_t \xi_i &= 3\rho^{2/3}\partial_{\rho} \BBR{\Omega^i_{,1}(\Gamma)v
},
\;\;\;i=1,5,6,7\\
\partial_t \xi_i &= {\rho}^{-1/3}
\BBR{\Omega^i_{,2}(\Gamma)+\Omega^i_{,3}(\Gamma)}v, \;\;\; i=2,3,4
\\
\xi_1 &= 3 \rho^{2/3} \partial_\rho \alpha^{1/3}
\end{aligned}\right.
\end{equation}
The system
\eqref{EXTSYS}$_1$-\eqref{EXTSYS}$_4$ describes the evolution of
the vector $(v, \alpha, \Xi)$ subject to the constraint
\eqref{EXTSYS}$_5$. It has the properties:

\medskip
\noindent
\begin{description}
\item[(a)] The constraint \eqref{EXTSYS}$_5$ is propagated by the evolution from the initial data,
since $\del_t (\xi_1 - 3 \rho^{2/3} \partial_\rho \alpha^{1/3}) = 0$.
We may thus write $\Omega(\Gamma)$, with $\Gamma$ as in \eqref{GAMMADEF2},
and still think of \eqref{EXTSYS} as a first order system.

\item[(b)]  If $\Xi(\cdot,0)=\Omega(\Gamma^0)$ with $\Gamma^0=\BBR{
f'(\rho / f)^{2/3},(f/\rho)^{1/3},(f/\rho)^{1/3}}$ for
$f=f(\rho)$, it remains in this form $\forall t$, {\it i.e.}
there exists $\alpha(\rho,t)$ such that  $\Gamma$ defined by
(\ref{GAMMADEF2}) satisfies $\Gamma(.,0)=\Gamma^0$ and
$\Xi=\Omega(\Gamma)$ $\forall t$. In other words, radial
elasticity (\ref{RPDE}) can be viewed as a constrained
evolution of (\ref{EXTSYS}).

\item[(c)] The enlarged system admits an entropy pair
\begin{equation}\label{EXTENTR}
\partial_t \BBR{ \f{v^2}{2} + G(\Xi) } - \partial_{\rho} \BBR{3 \rho^{2/3} \, G_{,i}(\Xi)
\, \Omega_{,1}^i(\Gamma) \,v}= 0
\end{equation} with (for convex $G$) strictly convex entropy
$\eta(v,\Xi)=\f{v^2}{2} + G(\Xi)$.
\end{description}

At this point we set
\begin{align}
&\qquad \qquad  \beta=\alpha_{\rho}/\alpha^{2/3} \, , \quad \gamma=\alpha^{2/3} \, ,
\nonumber
\\
\label{XIDEF}
&\Xi=\;\BBR{\beta \rho^{2/3}, \f{\alpha}{\rho},\f{\alpha}{\rho},
\f{\gamma} {\rho^{1/3}}, \f{3\gamma_{\rho}}{2}\rho^{2/3},
\f{3\gamma_{\rho}}{2}\rho^{2/3},\alpha_{\rho}\rho^{2/3}} \, ,
\end{align}
and proceed to simplify the extended system working with
$\alpha,\beta,\gamma,v$ as the independent variables.

Taking a closer look at the extended system we see that
$\xi_2=\xi_3$ by construction and hence equations
(\ref{EXTSYS})$_2$, $i=2,3$ are identical. Moreover,
\begin{equation}
\begin{aligned}
\partial_t \xi_2 = 3\alpha^{2/3}v/\rho \;\;\; &\Rightarrow
\;\;\; \partial_t
\xi_7=\rho^{2/3}\partial_{\rho}(\rho \,\partial_t \xi_2),\\
\partial_t \xi_4 = 2\alpha^{1/3}v/\rho^{1/3} \;\;\; &\Rightarrow
\;\;\; \partial_t \xi_5=\partial_t
\xi_6=\f{3}{2}\rho^{2/3}\partial_{\rho}\BBR{\rho^{1/3}
\partial_t\xi_4}.
\end{aligned}
\end{equation}
Hence (\ref{EXTSYS}) is overdetermined and
extra equations (\ref{EXTSYS})$_2$, $i=5,6,7$ and
(\ref{EXTSYS})$_3$, $i=3$ can be excluded. In
explicit form the extension is written as
\begin{equation}\label{EXPLSYS}
\left\{
\begin{aligned}
\partial_t \,v &=
\partial_{\rho} \BBR{ 3\rho^{2/3} \, G_{,i}(\Xi) \, \Omega_{,1}^i(\Gamma)} -
\rho^{-1/3}\,
G_{,i}(\Xi) \, \BBR{\Omega_{,2}^i(\Gamma)+\Omega_{,3}^i(\Gamma)}\\
\partial_t \beta &= \partial_{\rho}(3v)   \\
\partial_t \alpha &= 3 \alpha^{2/3}v   \\
\partial_t \gamma &= 2 \alpha^{1/3}v    \\
\alpha(1)&=\lambda,\;\alpha \geqslant0, \;\;\alpha_{\rho} >0, \;\;
(\rho,t) \in (0,1)\times[0,\infty),
\end{aligned}
\right.
\end{equation} \\
where from (\ref{EXPLSYS})$_3$ and (\ref{EXPLSYS})$_4$ we can
derive the excluded equations
\begin{equation}\label{EXPLSYSEXTRA}
\begin{split}
\partial_t \alpha_{\rho} &= \partial_{\rho} (3 \alpha^{2/3}v) \\
\partial_t \gamma_{\rho} &= \partial_{\rho} (2 \alpha^{1/3}v).  \\
\end{split}
\end{equation}\par


\section{Variational Approximation Scheme}
\label{secvarapp}

In this section we introduce a variational approximation scheme
for the radial equation of elastodynamics.  The general approach
is to discretize the extended system by use of implicit-explicit
scheme. \par

Successive iterates are constructed by discretizing (\ref{EXTSYS})
as follows: Given the $(j-1)^{th}$ iterate
$(\alpha_0,\beta_0,\gamma_0,v_0)$ with $\alpha_0(\rho)\geqslant0$
and ${\alpha_0'(\rho)}>0$, $\rho\in(0,1)$,  we define
$\Xi^0=(\xi_i^0)_{i=1}^{7}$ by
\begin{equation}\label{XIZERODEFDISC}
\Xi^0(\rho)=\;\BBR{\beta_0 \rho^{2/3},
\f{\alpha_0}{\rho},\f{\alpha_0}{\rho}, \f{\gamma_0} {\rho^{1/3}},
\f{3{\gamma_0'}}{2}\rho^{2/3},
\f{3{\gamma_0'}}{2}\rho^{2/3},{\alpha_0'}\rho^{2/3}}
\end{equation}
and construct the $j^{th}$ iterate $(\alpha,\beta,\gamma,v)$, with
corresponding $\Xi=(\xi_i)_{i=1}^{7}$ defined by
\begin{equation}\label{XIDEFDISC}
\Xi(\rho)=\;\BBR{\beta \rho^{2/3},
\f{\alpha}{\rho},\f{\alpha}{\rho}, \f{\gamma} {\rho^{1/3}},
\f{3{\gamma'}}{2}\rho^{2/3},
\f{3{\gamma'}}{2}\rho^{2/3},{\alpha'}\rho^{2/3}} \, ,
\end{equation}
by solving
\begin{equation}\label{EXTDISCSYS}
\left\{
\begin{aligned}
(v-v_0)/h &= \f{d}{d\rho} \BBR{3\rho^{2/3} G_{,i}(\Xi)
\Omega^i_{,1}(\Gamma^0) }\\
&-\rho^{-1/3}G_{,i}(\Xi) \BBR{\Omega^i_{,2}(\Gamma^0)
 +\Omega^i_{,3}(\Gamma^0)}\\
(\xi_i-\xi^0_i)/h &= 3\rho^{2/3}\f{d}{d\rho}
\BBR{\Omega^i_{,1}(\Gamma^0)v },
\;\;i=1,5,6,7\\
(\xi_i-\xi^0_i)/h &= {\rho}^{-1/3}
\BBR{\Omega^i_{,2}(\Gamma^0)+\Omega^i_{,3}(\Gamma^0)}v, \;\;
i=2,3,4
\\
\xi_2(1)&=\xi_3(1)=\lambda,\; \xi_{2},\xi_{3} \geqslant 0, \;\xi_7
> 0, \;\; \rho\in (0,1),
\end{aligned}\right.
\end{equation}
where
\begin{align}
\label{GAMMADEFDISC}
\Gamma &=
(\alpha'(\rho/\alpha)^{2/3},(\alpha/\rho)^{1/3},(\alpha/\rho)^{1/3}) \, ,
\\
\label{GAMMAZERODEFDISC}
\Gamma^0 &=
(\alpha_0'(\rho/\alpha_0)^{2/3},(\alpha_0/\rho)^{1/3},(\alpha_0/\rho)^{1/3}).
\end{align}\par\medskip

As in the continuous case the discrete system (\ref{EXTDISCSYS})
is overdetermined with extra equations
\begin{equation}\label{EXPLDISCSYSEXTRA}
\begin{aligned}
\BBR{\alpha_{\rho} - {\alpha_0}_{\rho}}/{h} &= \f{d}{d\rho}\BBR{3{\alpha_0}^{2/3}v} \, , \\
\BBR{\gamma_{\rho} - {\gamma_0}_{\rho}}/{h} &= \f{d}{d\rho}\BBR{2 {\alpha_0}^{1/3}v} \, , \\
\end{aligned}
\end{equation}
corresponding to (\ref{EXTDISCSYS})$_2$, $i=5,6,7$. Excluding them
from the system  above we get
\begin{equation}\label{EXPLDISCSYS}
\left\{
\begin{aligned}
\BBR{v-v_0}/h &=
\f{d}{d\rho}\BBR{ 3\rho^{2/3} \, G_{,i}(\Xi) \, \Omega_{,1}^i(\Gamma^0)}\\
 &-\rho^{-1/3}\,G_{,i}(\Xi) \BBR{\Omega_{,2}^i(\Gamma^0)+\Omega_{,3}^i(\Gamma^0)}\\
\BBR{\beta  - \beta_0 }/{h} &= \f{d}{d\rho}(3v) \\
\BBR{\alpha - \alpha_0}/{h} &= 3 {\alpha_0}^{2/3}v    \\
\BBR{\gamma - \gamma_0}/{h} &= 2 {\alpha_0}^{1/3}v    \\
\alpha(1)&=\lambda, \;\alpha \geqslant0, \;\;\alpha' >0, \;\;
\rho\in (0,1) \, .
\end{aligned}
\right.
\end{equation}
Note that equations (\ref{EXPLDISCSYSEXTRA}) can be
derived from (\ref{EXPLDISCSYS})$_{3,4}$.
\par\medskip

Time-step approximations capture a subtle form of dissipation associated with the
underlying variational structure and the convexity of the entropy, \cite{Tz,Tz2}.
Indeed, solutions of \eqref{EXPLDISCSYS} satisfy a discrete entropy inequality:
To see that, consider a smooth solution $(\Xi,v)$ of (\ref{EXTDISCSYS}) associated to
smooth data $(\Xi^0,v_0)$ given by (\ref{XIZERODEFDISC}). Multiplying
(\ref{EXTDISCSYS})$_1$ by $v$ we get
\begin{equation}\label{MULTDISCV}
\begin{split}
\f{v(v-v_0)}{h} &+  G_{,i}(\Xi)  \BBR{ 3\rho^{2/3}
\Omega^i_{,1}(\Gamma^0)\f{dv}{d\rho}+
\rho^{-1/3}\BBR{\Omega^i_{,2}(\Gamma^0) +
\Omega^i_{,3}(\Gamma^0)}v} \\
&=\f{d}{d\rho} \BBR{3\rho^{2/3}
G_{,i}(\Xi)\Omega^i_{,1}(\Gamma^0)v}.
\end{split}
\end{equation}
Then denoting
\begin{equation}\label{ADEF}
A_i = 3\rho^{2/3} \Omega^i_{,1}(\Gamma^0)\f{dv}{d\rho}+
\rho^{-1/3}\BBR{\Omega^i_{,2}(\Gamma^0) +
\Omega^i_{,3}(\Gamma^0)}v, \;\;\;i=1,\dots,7
\end{equation}
we claim
\begin{equation}\label{APROP}
A_i = \f{\xi_i-\xi^0_i}{h}.
\end{equation}
Indeed, for $i=2,3,4$ we have $\Omega^i_{,1}=0$ and hence
(\ref{EXTDISCSYS})$_3$ and (\ref{ADEF}) imply (\ref{APROP}). For
$i=1,5,6,7$ by the properties (\ref{NLAGROMEGA2}) of null
Lagrangians and (\ref{EXTDISCSYS})$_2$ we get
\begin{equation}\label{XIDISCEVOL2}
\begin{split}
A_i &= v \BBR{ -
3\rho^{2/3}\f{d}{d\rho}\BBR{\Omega^i_{,1}(\Gamma^0)} + \rho^{-1/3}
\BBR{\Omega^i_{,2}(\Gamma^0)+\Omega^i_{,3}(\Gamma^0)}}\\ &+
3\rho^{2/3}\f{d}{d\rho}\BBR {\Omega^i_{,1}(\Gamma^0) v}
 = \BBR{\xi_i-\xi^0_i}/h, \;\;\; i=1,5,6,7.
\end{split}
\end{equation}
Thus (\ref{MULTDISCV}) and (\ref{APROP}) imply
\begin{equation*}
\f{1}{h}\BBR{v(v-v_0)+G_{,i}(\Xi)\BBR{\xi_i-\xi^0_i}}=\f{d}{d\rho}
\BBR{3\rho^{2/3} G_{,i}(\Xi)\Omega^i_{,1}(\Gamma^0)v}.
\end{equation*}
Now, we denote $\Theta=(v,\Xi)$ and $\Theta^0=(v_0,\Xi^0)$. Then
$\eta = 1/2 v^2 + G(\Xi)$ satisfies
\begin{equation*}
\f{1}{h}D\eta \cdot \BBR{\Theta-\Theta^0}-\f{d}{d\rho}
\BBR{3\rho^{2/3} G_{,i}(\Xi)\Omega^i_{,1}(\Gamma^0)v}=0.
\end{equation*}
For $G$ convex  the
following identity holds
\begin{equation}\label{DISCENTR}
\f{\eta(\Theta)-\eta(\Theta^0)}{h}-\f{d}{d\rho} \BBR{3\rho^{2/3}
G_{,i}(\Xi)\Omega^i_{,1}(\Gamma^0)v} \leqslant 0.
\end{equation}\par


\begin{Remark} \rm
We have not studied in this article the convergence as the time-step $h \to 0$.
For the three-dimensional elasticity equations this process produces measure-valued solutions
\cite{Tz2} while for one-dimensional elasticity it gives entropy weak solutions \cite{Tz}.
In the present case we would expect to obtain weak solutions, but the compactness properties of
\eqref{SYSTEMINTRO} are not at present sufficiently understood.
There are two differences of \eqref{SYSTEMINTRO} relative to the well understood compactness theory of one-dimensional elasticity: the
dependence of the stress on lower order terms, and the singularity at $R=0$.
Nevertheless, if the iterates $u^h$, $v^h$ converge strongly, the discrete entropy inequality
\eqref{DISCENTR} gives a weak solution dissipating the mechanical energy.
\end{Remark}


\section{Existence of minimizers}
\label{seceximin}

Henceforth, we consider stored-energy functions \eqref{storeden1} of the form
\begin{equation}\label{PHIDEF}
\begin{split}
&\Phi(v_1,v_2,v_3)  = \bar G (v_1, v_2,v_3,_2v_3,v_1v_3,v_1v_2,v_1v_2v_3)
\\
&= \varphi(v_1) + \varphi(v_2) + \varphi(v_3)
+g(v_2v_3) + g(v_1v_3) + g(v_1v_2) + h(v_1v_2v_3).
\end{split}
\end{equation}
Then, the function $G$ defined in (\ref{GDEF2}) reads
\begin{equation}\label{GPART}
\begin{aligned}
G(\Xi;\rho) &=  \varphi(\xi_1) +
\varphi\BBR{\xi_2^{1/3}}+\varphi\BBR{\xi_3^{1/3}}
\\&+g\BBR{\xi_4\rho^{1/3}}
+ g\BBR{\xi_5\rho^{1/3}} + g\BBR{\xi_6\rho^{1/3}} +
h(\xi_7\rho^{2/3}).\\
\end{aligned}
\end{equation}
Now, define $\psi(x)=\varphi(x^{1/3})$.  Then, with $\Xi$ defined
in (\ref{XIDEFDISC}), the above is expressed by
\begin{equation}\label{GPARTEXPL}
\begin{split}
G(\Xi) =\varphi(\beta\rho^{2/3})
+2\psi\BBR{{\alpha}/{\rho}}+g\BBR{\gamma/\rho^{2/3}}+2g\BBR{3\gamma'\rho^{1/3}/2}+h(\alpha').
\end{split}
\end{equation}\par\bigskip

We place the following
assumptions on the functions $\varphi$, $\psi$, $g$, $h$ appearing
above:
\begin{itemize}
\item[(A1)] $\lim_{\delta \to 0+} h(\delta) = \lim_{\delta \to
+\infty}h(\delta)/\delta=+\infty$; \item[(A2)] $\varphi,\psi, g
\in C^2(\RR)$ and $h \in C^2(\RR_+)$ satisfy
\begin{equation}
\varphi, \psi, g, h , \varphi'', \psi'', g''\geqslant 0 \txq{and}
h'' > 0;
\end{equation}
\item[(A3)] For $1<p,q<\infty$ and some constants $c_1,c_2>0$
\begin{equation}
\lim_{x\to\infty} \f{\varphi(x)}{|x|^{3p}} = \lim_{x\to\infty}
\f{\psi(x)}{|x|^p} = c_1 \, , \; \lim_{x\to\infty}
\f{g(x)}{|x|^q}=c_2;
\end{equation}
\item[(A4)] For $1<p,q<\infty$ as in (A3) and $C_1,C_2,C_3 >0$
\begin{equation}
\limsup_{x\to\infty} \f{ |\varphi'(x)|}{|x|^{3p-1}} \le C_1 \, \; \limsup_{x\to\infty}
\f{|\psi'(x)|}{|x|^{p-1}} \le C_2 \, , \;  \limsup_{x\to\infty}
\f{|g'(x)|}{|x|^q}\le C_3;
\end{equation}
\end{itemize}
In particular,  G is {\it{convex}}.
\par\bigskip
We define spaces of functions on the interval $\rho\in(0,1)$
\begin{equation*}
\begin{aligned}
X_1 &=\BBF{f(\rho)\in W^{1,1}(0,1)      :\;\; f/\rho\in L^p(0,1)},\\
X_2 &=\BBF{f(\rho)\in L^1_{loc}(0,1)    :\;\; f\rho^{2/3}\in L^{3p}(0,1)},\\
X_3 &=\BBF{f(\rho)\in W^{1,1}_{loc}(0,1):\;\; f/\rho^{2/3}\in L^q,\;f'\rho^{1/3}\in L^q(0,1)},\\
Y   &=\BBF{f(\rho)\in W^{1,1}_{loc}(0,1):\;\; f\in L^2,\;f'\rho^{2/3}\in L^{3p}(0,1)},\\
and\\
X&=X_1 \otimes X_2 \otimes X_3 \otimes Y.
\end{aligned}
\end{equation*}
\par\medskip
We fix a parameter $\lambda>0$ and for the initial data
$(\alpha_0,\beta_0,\gamma_0,v_0) \in X$ we require
\begin{equation}\label{IDATAPROP}
\left\{
\begin{aligned}
&\alpha_0(1)=\lambda  \, , \quad  \alpha_0 \geqslant 0 \, , \; \;
{\alpha_0'}>0 \, , \; \tx{a.e.}\;\rho\in(0,1) \, , \\
&\; \; \;  \int\limits_{0}^{1} \, \f{1}{2}{v_0}^2  + G(\Xi^0)
\, d\rho \; < \; \infty \, .
\end{aligned}
\right.
\end{equation}
Consider the problem of minimizing the functional
\begin{eqnarray}
\begin{aligned}
I(\alpha,\beta,\gamma,v)&=\int\limits_{0}^{1} \, \f{1}{2}(v-v_0)^2 + G(\Xi) \, d\rho&&\\
&=\int\limits_{0}^{1} \f{1}{2}(v-v_0)^2+\varphi(\beta\rho^{2/3})
+2\psi\BBR{{\alpha}/{\rho}}\\
&\txq{}\txq{}\txq{}+g\BBR{\gamma/\rho^{2/3}}+2g\BBR{3\gamma'\rho^{1/3}/2}+h(\alpha')\,d\rho
\end{aligned}
\end{eqnarray}
over the admissible set
\begin{eqnarray}
\begin{aligned}
\mathcal{A}_{\lambda}= \{ (\alpha,\beta,\gamma,v)\in X:\,&
\alpha(0) \geqslant 0,\,\alpha(1)=\lambda,\,\alpha'
>0 \;\tx{a.e.} \; and\\
&I(\alpha,\beta,\gamma,v)<\infty,\,\f{\BBR{\beta  - \beta_0 }}{h}=3v',\\
&\f{\BBR{\alpha - \alpha_0}}{h} = 3 {\alpha_0}^{2/3}v,\,
\f{\BBR{\gamma - \gamma_0}}{h} = 2 {\alpha_0}^{1/3}v\}.
\end{aligned}
\end{eqnarray}
We note that $I$ is well-defined for $(\alpha,\beta,\gamma,v)\in
X$ with $\alpha'>0$ a.e. $\rho\in(0,1)$,  though $I$ might be equal to
$\infty$.\par\bigskip

\begin{lmm}
\label{NONEMP}
The admissible set $\mathcal{A}_{\lambda}$ is nonempty.
\end{lmm}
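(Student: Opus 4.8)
The plan is to construct an explicit element of $\mathcal{A}_{\lambda}$ by choosing a convenient velocity field $v$ and then defining $\alpha$, $\beta$, $\gamma$ from the three affine constraints. The simplest choice is to take $v \equiv v_0$; then the constraint $(\alpha - \alpha_0)/h = 3\alpha_0^{2/3} v$ becomes $\alpha = \alpha_0 + 3h\,\alpha_0^{2/3} v_0$, the constraint $(\gamma - \gamma_0)/h = 2\alpha_0^{1/3} v$ becomes $\gamma = \gamma_0 + 2h\,\alpha_0^{1/3} v_0$, and $(\beta - \beta_0)/h = 3v'$ becomes $\beta = \beta_0 + 3h\,v_0'$. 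One then has to verify: (i) the resulting quadruple lies in $X = X_1 \otimes X_2 \otimes X_3 \otimes Y$; (ii) the boundary/positivity conditions $\alpha(0) \geqslant 0$, $\alpha(1) = \lambda$, $\alpha' > 0$ a.e.; and (iii) $I(\alpha,\beta,\gamma,v) < \infty$, which since $v = v_0$ reduces to $\int_0^1 G(\Xi)\,d\rho < \infty$.

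For (ii), the condition $\alpha(1) = \lambda$ will generically \emph{fail} for this naive choice because $\alpha(1) = \alpha_0(1) + 3h\,\alpha_0(1)^{2/3} v_0(1) = \lambda + 3h\lambda^{2/3} v_0(1)$, so a correction is needed. The fix is to modify $v_0$ near $\rho = 1$: replace $v_0$ by $\tilde v = v_0 + \delta v$ where $\delta v$ is a smooth compactly-supported (in $(0,1)$) perturbation chosen so that the modified $\alpha$ still satisfies $\alpha(1) = \lambda$ — concretely, since $\alpha(1)$ depends only on the boundary value, simply require $\delta v$ to vanish in a neighborhood of $\rho = 1$ and note then $\alpha(1) = \alpha_0(1) + 3h\alpha_0(1)^{2/3}\tilde v(1) = \lambda$ forces $\tilde v(1) = 0$, i.e. we must also ensure $v_0$ itself can be modified to vanish at $\rho=1$. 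Alternatively, and more cleanly, one can simply \emph{define} $\alpha$ directly: pick any smooth $\alpha$ with $\alpha(1) = \lambda$, $\alpha > 0$, $\alpha' > 0$ on $(0,1)$, with $\alpha$ and $\alpha'$ having enough integrability (e.g. $\alpha = \alpha_0$ itself works for the positivity and boundary conditions by \eqref{IDATAPROP}), then \emph{solve} the constraint $(\alpha - \alpha_0)/h = 3\alpha_0^{2/3} v$ for $v = (\alpha - \alpha_0)/(3h\,\alpha_0^{2/3})$, and finally set $\beta$, $\gamma$ from the remaining two constraints. Taking $\alpha = \alpha_0$ gives $v = 0$, hence $\beta = \beta_0$ and $\gamma = \gamma_0$, and the candidate is just $(\alpha_0, \beta_0, \gamma_0, 0)$.

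With the choice $(\alpha_0,\beta_0,\gamma_0,0)$ the verification is immediate: the membership $(\alpha_0,\beta_0,\gamma_0) \in X_1\otimes X_2 \otimes X_3$ and $0 \in Y$ hold by hypothesis $(\alpha_0,\beta_0,\gamma_0,v_0)\in X$ (and $0$ is trivially in $Y$); the conditions $\alpha_0(0)\geqslant 0$ — which follows from $\alpha_0 \geqslant 0$ and absolute continuity, giving a well-defined nonnegative trace — $\alpha_0(1) = \lambda$, and $\alpha_0' > 0$ a.e. are exactly \eqref{IDATAPROP}; the three affine constraints hold trivially since $v = 0$ forces $\alpha = \alpha_0$, $\beta = \beta_0$, $\gamma = \gamma_0$; and finally $I(\alpha_0,\beta_0,\gamma_0,0) = \int_0^1 \tfrac12 v_0^2 + G(\Xi^0)\,d\rho < \infty$, which is precisely the second line of \eqref{IDATAPROP} (note $\Xi = \Xi^0$ here).

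\textbf{Main obstacle.} There is essentially no obstacle: the content of the lemma is just that the initial data $(\alpha_0,\beta_0,\gamma_0,v_0)$, when fed to the scheme with zero velocity update, already constitute an admissible configuration — i.e. $\mathcal{A}_\lambda$ contains the "trivial" iterate. The only point requiring a word of care is confirming that $v \equiv 0$ lies in $Y$ (clear, since $0 \in L^2$ and $0 \cdot \rho^{2/3} = 0 \in L^{3p}$) and that the trace $\alpha_0(0)$ is well-defined and nonnegative, which follows since $\alpha_0 \in W^{1,1}(0,1)$ (from $\alpha_0 \in X_1$) is absolutely continuous up to the endpoints. Hence $(\alpha_0,\beta_0,\gamma_0,0) \in \mathcal{A}_\lambda$ and the set is nonempty.
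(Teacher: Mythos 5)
Your final construction, taking $(\alpha_0,\beta_0,\gamma_0,0)$ and verifying the membership conditions directly from \eqref{IDATAPROP}, is exactly the paper's proof. The preliminary detour through $v \equiv v_0$ is superfluous but harmless; the argument as stated is correct.
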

\begin{proof}
Take $(\alpha,\beta,\gamma,v)=(\alpha_0,\beta_0,\gamma_0,0)\in X$.
Then (\ref{IDATAPROP}) implies $\alpha(0)\geqslant 0$,
$\alpha(1)=\lambda$, $\alpha'>0$ a.e. and
\begin{equation*}
I(\alpha,\beta,\gamma,v)=\int\limits_{0}^{1} \, \f{1}{2}{v_0}^2 +
G(\Xi^0) \,d\rho \; < \; \infty.
\end{equation*}
Moreover the following holds: $(\beta-\beta_0)/h = 0 = 3v'$,
$(\alpha-\alpha_0)/h = 0 = 3{\alpha_0}^{2/3}v$, and
$(\gamma-\gamma_0)/h = 0 = 2{\alpha_0}^{1/3}v$. Hence
$(\alpha,\beta,\gamma,v)\in \mathcal{A_{\lambda}}$.
\end{proof}
\par\medskip

\begin{lmm}[\bf I-bounded sequences]\label{IBNDSEQLMM}
Let $\BBF{(\alpha_n,\beta_n,\gamma_n,v_n)}_{n \in \mathbb{N}}
\subset \mathcal{A}_{\lambda}$ and
\begin{equation}\label{SUPI}
M=\sup_{n\in \mathbb{N}} \, I(\alpha_n,\beta_n,\gamma_n,v_n) \,
<\,\infty.
\end{equation}
Then $\exists$ $(\alpha,\beta,\gamma,v)\in X$ and a subsequence
$\BBF{(\alpha_{\mu},\beta_{\mu},\gamma_{\mu},v_{\mu})}$ s.t.
\begin{equation}\label{WKCONV}
\begin{aligned}
\alpha_{\mu}&\rightharpoonup\alpha                        &in&\;\;W^{1,1}, \;\;&\alpha_{\mu}/\rho        &\rightharpoonup  \alpha/\rho     &in& \;\;L^p,\\
\gamma_{\mu}/\rho^{2/3}&\rightharpoonup\gamma/\rho^{2/3}  &in&\;\;L^q,     \;\;&\gamma_{\mu}'\rho^{1/3}  &\rightharpoonup\gamma'\rho^{1/3} &in& \;\;L^q,\\
v_{\mu}                   &\rightharpoonup  v             &in&\;\;L^2,     \;\;&v_{\mu}'\rho^{2/3}       &\rightharpoonup  v'\rho^{2/3}    &in& \;\;L^{3p},\\
\beta_{\mu}\rho^{2/3}&\rightharpoonup\beta\rho^{2/3}      &in&\;\;L^{3p}.&&\\
\end{aligned}
\end{equation}

\end{lmm}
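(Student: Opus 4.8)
The proof will run the direct method of the calculus of variations. The plan is: (i) convert the uniform bound $M$ into uniform bounds for each component in a suitable space, using the growth hypotheses (A1)--(A3); (ii) extract weakly convergent subsequences; (iii) verify that the resulting limits are mutually compatible and assemble into a quadruple of $X$.

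\textbf{Step 1 (a priori bounds).} By (A2), $\varphi,\psi,g,h\geqslant 0$ and $(v_n-v_0)^2\geqslant 0$, so each of the five terms in the integrand of $I(\alpha_n,\beta_n,\gamma_n,v_n)$ written in the form \eqref{GPARTEXPL}, as well as the kinetic term $\f{1}{2}(v_n-v_0)^2$, integrates to at most $M$. From (A3), $\varphi(x),\psi(x),g(x)$ grow at infinity like $|x|^{3p},|x|^{p},|x|^{q}$ respectively, and this converts the bound $M$ into uniform bounds for $\beta_n\rho^{2/3}$ in $L^{3p}$, for $\alpha_n/\rho$ in $L^{p}$, for $\gamma_n/\rho^{2/3}$ and $\gamma_n'\rho^{1/3}$ in $L^{q}$, and for $v_n-v_0$ in $L^2$; since $v_0\in L^2$ this gives a uniform bound for $v_n$ in $L^2$. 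The affine constraint $(\beta_n-\beta_0)/h=3v_n'$ together with $\beta_0\in X_2$ then bounds $v_n'\rho^{2/3}$ in $L^{3p}$. Finally, $\int_0^1 h(\alpha_n')\,d\rho\leqslant M$, the superlinearity $h(\delta)/\delta\to\infty$ in (A1), and the convexity of $h$ give, by the de la Vall\'ee--Poussin criterion, that $\{\alpha_n'\}$ is uniformly integrable in $L^1(0,1)$.

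\textbf{Step 2 (weak $W^{1,1}$-compactness of $\alpha_n$).} This is the crux and the expected main obstacle: since $L^1$ is not reflexive, the plain bound on $\|\alpha_n'\|_{L^1}$ available from $\alpha_n'>0$, $\alpha_n(1)=\lambda$ and $\alpha_n(0)\geqslant 0$ is not enough, and one genuinely needs the superlinear term $h(\alpha_n')$ and hypothesis (A1). By Dunford--Pettis the uniformly integrable family $\{\alpha_n'\}$ is relatively weakly sequentially compact in $L^1$, so along a subsequence $\alpha_n'\rightharpoonup\zeta$ in $L^1$. Writing $\alpha_n(\rho)=\lambda-\int_\rho^1\alpha_n'$, uniform integrability makes $\{\alpha_n\}$ uniformly bounded and equicontinuous on $[0,1]$, so by Arzel\`a--Ascoli, after passing to a further subsequence, $\alpha_n\to\alpha$ uniformly on $[0,1]$, with $\alpha'=\zeta$; hence $\alpha_n\rightharpoonup\alpha$ in $W^{1,1}$, and $\alpha\in C[0,1]$ with $\alpha(1)=\lambda$, $\alpha\geqslant 0$.

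\textbf{Step 3 (identification of the limits).} Each of $\alpha_n/\rho$, $\gamma_n/\rho^{2/3}$, $\gamma_n'\rho^{1/3}$, $v_n'\rho^{2/3}$, $\beta_n\rho^{2/3}$, $v_n$ is bounded in a reflexive space $L^r$, so after a further diagonal extraction each converges weakly in that space; it remains to identify the limits. Since $v_n\rightharpoonup v$ in $L^2$ and $\alpha_0\in W^{1,1}(0,1)\hookrightarrow C[0,1]$ (so $\alpha_0^{1/3}\in L^\infty$), the constraints defining $\mathcal{A}_{\lambda}$ give $\gamma_n=\gamma_0+2h\alpha_0^{1/3}v_n\rightharpoonup\gamma:=\gamma_0+2h\alpha_0^{1/3}v$ in $L^2$ and $\beta_n=\beta_0+3hv_n'\rightharpoonup\beta:=\beta_0+3hv'$ in $\mathcal{D}'(0,1)$, while $\alpha_n\to\alpha$ uniformly. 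Because multiplication by the fixed weights $\rho^{\pm s}$ is continuous on $\mathcal{D}'(0,1)$ and a weak-$L^r$ limit is in particular a distributional limit, uniqueness of distributional limits forces $\alpha_n/\rho\rightharpoonup\alpha/\rho$ in $L^p$, $\gamma_n/\rho^{2/3}\rightharpoonup\gamma/\rho^{2/3}$ and $\gamma_n'\rho^{1/3}\rightharpoonup\gamma'\rho^{1/3}$ in $L^q$, $v_n'\rho^{2/3}\rightharpoonup v'\rho^{2/3}$ in $L^{3p}$, and $\beta_n\rho^{2/3}\rightharpoonup\beta\rho^{2/3}$ in $L^{3p}$. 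Consequently $\alpha\in X_1$, $\beta\in X_2$, $\gamma\in X_3$, $v\in Y$, so $(\alpha,\beta,\gamma,v)\in X$, and all the convergences in \eqref{WKCONV} hold along the final subsequence. Besides Step 2, the only other delicate point is the bookkeeping of these nested subsequences and the fact that limits computed in different $L^r$ spaces must agree, which the passage through $\mathcal{D}'(0,1)$ settles cleanly.
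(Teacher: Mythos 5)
Your proof is correct and follows essentially the same strategy as the paper's: coercivity from (A3) yields bounds in the reflexive $L^p$, $L^q$, $L^{3p}$, $L^2$ spaces, the non-reflexive piece $\alpha_n'$ is handled by de la Vall\'ee Poussin/Dunford--Pettis thanks to $\int_0^1 h(\alpha_n')\,d\rho \leqslant M$, and the weak limits are identified via the affine constraints and uniqueness of distributional limits. Your Arzel\`a--Ascoli step and the explicit identification of $\gamma$ and $\beta$ through the constraints spell out details the paper leaves implicit (the paper condenses the first into a single invocation of de la Vall\'ee Poussin and only identifies $v'$ explicitly), but the underlying argument is the same.
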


\begin{proof}
First, $\alpha_n \geqslant 0$, $\alpha_n'>0$ a.e. and
$\alpha_n(1)=\lambda$ imply that $|\alpha_n| \leqslant \lambda$.
Second, from (\ref{SUPI}) it follows $\int^1_0 \, h(\alpha_n')\,
d\rho<M, \; \forall n$. By the de la Vall\'{e}e Poussin
criterion there exists $\alpha \in W^{1,1}$ and a subsequence
$\{\alpha_{s}\}$ such that
$\alpha_{s}
\rightharpoonup \alpha$ weakly in $W^{1,1}$.\par\medskip

By (A3) there exist constants $C_1,C_2$ s.t. $\varphi(x)
\geqslant C_1|x|^{3p}-C_2$, $\psi(x) \geqslant C_1|x|^p-C_2$ and
$g(x) \geqslant C_1|x|^q-C_2$, and thus
\begin{equation}\label{COERSIVITY}
\begin{aligned}
M &\geqslant I(\alpha_{s},\beta_{s},\gamma_{s},v_{s})
\geqslant \int\limits_0^1 \, \f{1}{2}(v_{s}-v_0)^2 \, d\rho\\
& + C_1 \int\limits_0^1 \, |\beta_{s}\rho^{2/3}|^{3p} +
2|\alpha_{s}/\rho|^p + |\gamma_{s}/\rho^{2/3}|^q +
\f{3}{2}|\gamma_{s}'\rho^{1/3}|^q \,d\rho - 4C_2
\end{aligned}
\end{equation}
This implies for $1<p,q<\infty$ that
$\alpha/\rho \in L^p$ and there exist $\beta\in
X_2,\gamma \in X_3$, and $v \in L^2$ and a subsequence
$\{\alpha_{\mu},\beta_{\mu},\gamma_{\mu},v_{\mu}\}$ of
$\{\alpha_{s},\beta_{s},\gamma_{s},v_{s}\}$ such that
(\ref{WKCONV})$_{2,3,4,5,6}$ hold. \par\bigskip

Finally, as $(\alpha_{\mu},\beta_{\mu},\gamma_{\mu},v_{\mu}) \in
\mathrm{A}_{\lambda}$ we have
$3v_{\mu}'\rho^{2/3}=(\beta_{\mu}-\beta_0)\rho^{2/3}/h$. Then by
(\ref{WKCONV})$_3$ we get $3v_{\mu}'\rho^{2/3} \rightharpoonup
(\beta-\beta_0)\rho^{2/3}/h$ in $L^{3p}$. Then by
(\ref{WKCONV})$_6$ for each $f\in C_0^{\infty}(0,1)$
\begin{equation}
\begin{aligned}
\int_0^1 \, vf' \, d\rho &= \lim_{\mu\to\infty}\int_0^1 \, v_{\mu}f' \, d\rho\\
 &=-\lim_{\mu\to\infty}\int_0^1 \, v_{\mu}'f \, d\rho = -\int_0^1 \, \f{1}{3h}(\beta-\beta_0)f \,
d\rho
\end{aligned}
\end{equation}
and hence $v'=(\beta-\beta_0)/3h$. Therefore $v\in Y$ and
$v_{\mu}'\rho^{2/3} \rightharpoonup v'\rho^{2/3}$.
\end{proof} \par\medskip

\begin{thm}[\bf Lower semi-continuity]\label{LWSMCONTTHM}
Let $\{ (\alpha_n,\beta_n,\gamma_n,v_n) \}_{n \in
\mathbb{N}}\subset \mathcal{A}_{\lambda}$,
$(\alpha,\beta,\gamma,v)\in X$ satisfy (\ref{SUPI}) and
(\ref{WKCONV}). Then $(\alpha,\beta,\gamma,v)\in
\mathcal{A}_{\lambda}$ and
\begin{equation}\label{LWSMCT}
I(\alpha,\beta,\gamma,v) \leqslant
\liminf_{n\to\infty}I(\alpha_n,\beta_n,\gamma_n,v_n)=s<\infty.
\end{equation}

\end{thm}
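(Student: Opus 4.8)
The plan is to verify first that the weak limit $(\alpha,\beta,\gamma,v)$ belongs to the admissible set $\mathcal{A}_\lambda$, and then to obtain the inequality \eqref{LWSMCT} by treating $I$ as a sum of convex integral functionals. Since $X$-membership is already assumed, only the extra conditions of $\mathcal{A}_\lambda$ need checking. For the boundary data I would first upgrade the weak $W^{1,1}$-convergence $\alpha_n\rightharpoonup\alpha$ to uniform convergence on $[0,1]$: the bound $|\alpha_n|\le\lambda$ established in the proof of Lemma~\ref{IBNDSEQLMM}, together with the equiintegrability of $\{\alpha_n'\}$ (which follows from its weak $L^1$-convergence via the Dunford--Pettis theorem), makes $\{\alpha_n\}$ equicontinuous and uniformly bounded, so by Arzel\`a--Ascoli a subsequence converges in $C[0,1]$, necessarily to $\alpha$; by the usual subsequence argument the whole sequence converges, $\alpha_n\to\alpha$ in $C[0,1]$. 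Hence $\alpha(0)=\lim_n\alpha_n(0)\ge0$ and $\alpha(1)=\lim_n\alpha_n(1)=\lambda$.

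Next I would establish lower semicontinuity of $I$ by splitting it into its six nonnegative pieces $\int_0^1\f12(v-v_0)^2$, $\int_0^1\varphi(\beta\rho^{2/3})$, $2\int_0^1\psi(\alpha/\rho)$, $\int_0^1 g(\gamma/\rho^{2/3})$, $2\int_0^1 g(3\gamma'\rho^{1/3}/2)$ and $\int_0^1 h(\alpha')$. Absorbing the $\rho$-weights into the quantities appearing in \eqref{WKCONV}, each piece is of the form $u\mapsto\int_0^1 f(u)\,d\rho$ with $f$ convex and nonnegative by (A2), evaluated along a sequence that converges weakly in the relevant $L^r$ with $1\le r<\infty$ (namely $r=2$ for the kinetic term and $r=3p,\,p,\,q,\,q,\,1$ for the others, matching the lines of \eqref{WKCONV}). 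The classical lower semicontinuity theorem for convex integrands (Mazur's lemma to produce strongly convergent convex combinations, then Fatou) gives sequential weak lower semicontinuity of each piece, the $\int h(\alpha')$ piece using only weak $L^1$-convergence, which suffices since $h$ is convex and nonnegative. As all six pieces are nonnegative, summing the six inequalities and using that the sum of the six $\liminf$'s does not exceed the $\liminf$ of the sum yields $I(\alpha,\beta,\gamma,v)\le\liminf_n I(\alpha_n,\beta_n,\gamma_n,v_n)=s$; and $s\le M<\infty$ by \eqref{SUPI}, so in particular $I(\alpha,\beta,\gamma,v)<\infty$.

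It then remains to recover the structural constraints. From the bound just obtained, $\int_0^1 h(\alpha')\,d\rho<\infty$; weak $L^1$-limits of nonnegative functions are nonnegative, so $\alpha'\ge0$ a.e., and if $\alpha'$ vanished on a set of positive measure then $h(\delta)\to+\infty$ as $\delta\to0^+$ (assumption (A1)) would force $\int_0^1 h(\alpha')=+\infty$, a contradiction; hence $\alpha'>0$ a.e. Finally I would pass to the limit in the three affine constraints: $(\beta_n-\beta_0)/h=3v_n'$ is already handled in the proof of Lemma~\ref{IBNDSEQLMM} (multiply by $\rho^{2/3}$ and use the weak $L^{3p}$-convergence of $\beta_n\rho^{2/3}$ and $v_n'\rho^{2/3}$); $(\alpha_n-\alpha_0)/h=3\alpha_0^{2/3}v_n$ passes to the limit because $\alpha_n\to\alpha$ uniformly while $\alpha_0^{2/3}\in L^\infty$ (as $0\le\alpha_0\le\lambda$ by \eqref{IDATAPROP}) and $v_n\rightharpoonup v$ in $L^2$; and $(\gamma_n-\gamma_0)/h=2\alpha_0^{1/3}v_n$ is treated the same way after restricting to each $[\delta,1]$, where the weight $\rho^{2/3}$ is bounded away from $0$ so that $\gamma_n\rightharpoonup\gamma$ in $L^q(\delta,1)$, and then letting $\delta\to0$. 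Together with the boundary data this gives $(\alpha,\beta,\gamma,v)\in\mathcal{A}_\lambda$, which with the second paragraph completes the proof.

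The step I expect to be the main obstacle is keeping the constraint $\alpha'>0$ closed under the weak limit, i.e.\ ruling out $\alpha'=0$ on a positive-measure set: this is exactly where the singular growth of $h$ at the origin in (A1) must be used, and it is the mechanism by which the minimizer constructed later will automatically satisfy the positivity-of-Jacobian constraint. A secondary, purely technical nuisance is the bookkeeping with the singular weights $\rho^{2/3},\rho^{1/3}$ when passing to the limit in the constraints (handled by localizing to $[\delta,1]$), and one should also recall that the superlinear growth of $h$ at $+\infty$ in (A1) was already essential upstream, in Lemma~\ref{IBNDSEQLMM}, to supply the weak $W^{1,1}$-compactness of $\{\alpha_n\}$ via the de la Vall\'{e}e Poussin criterion.
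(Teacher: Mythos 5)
Your proof is correct, and it differs from the paper's proof in a genuine and arguably cleaner way in how it handles the singular term $\int h(\alpha')$ and the constraint $\alpha'>0$. The paper establishes $\alpha'>0$ a.e.\ \emph{before} lower semicontinuity: it first rules out $m\{\alpha'<0\}>0$ by testing the weak $L^1$-convergence of $\alpha_n'$ against $\chi_{\{\alpha'<0\}}$, then rules out $m\{\alpha'=0\}>0$ by an Egoroff argument (if $\alpha_n'\chi_A\to 0$ in $L^1$, pass to an a.e.-convergent subsequence, apply Egoroff to get $\alpha_{n_k}'\to 0$ uniformly on a set $B$ of positive measure, and conclude $\int_B h(\alpha_{n_k}')\ge m(B)\inf_B h(\alpha_{n_k}')\to\infty$, contradicting \eqref{SUPI}); only then does it prove the LSC inequality term by term via the elementary convexity inequalities $f(x_n)\ge f(x)+f'(x)(x_n-x)$, using the cut-off $A_\delta=\{\delta\le\alpha'\le 1/\delta\}$ for the $h$-term, duality to send the linear corrections $J_n\to 0$, and finally Monotone Convergence as $\delta\to 0^+$ (which is precisely where $0<\alpha'<\infty$ a.e.\ is needed). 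Your route reverses the logic: you prove LSC first for all six pieces by Mazur--Fatou with $h$ extended to an $[0,\infty]$-valued, convex, lsc integrand (lsc at $0$ coming exactly from $h(0^+)=+\infty$ in (A1)), which delivers $\int h(\alpha')\le\liminf\int h(\alpha_n')<\infty$ unconditionally, and then $\alpha'>0$ a.e.\ falls out for free since the extended $h$ is $+\infty$ on $(-\infty,0]$. This makes the Egoroff step and the $A_\delta$ truncation/MCT machinery unnecessary. One bookkeeping point worth flagging for a write-up: the standard Mazur--Fatou argument directly yields a bound by $\limsup_n\int h(\alpha_n')$, so one should first extract a subsequence along which $I_n$ (or each piece) achieves its $\liminf$ before forming the Mazur convex combinations; this is routine but should be said. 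Aside from this and the fact that the paper takes the uniform convergence of $\alpha_n$ only along a subsequence (which suffices), the two proofs agree on the boundary conditions (both via equiintegrability and Arzel\`a--Ascoli) and on passing to the limit in the three affine constraints.
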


\begin{proof}
By hypothesis $0 \leqslant
I_n=I(\alpha_n,\beta_n,\gamma_n,v_n) \leqslant M$, $\forall n \in
\mathbb{N}$ and thus $s<\infty$. Recall that $\alpha_n \rightharpoonup \alpha$
weakly in $W^{1,1}$ and (along a subsequence) uniformly on $C[0,1]$. Since
$\alpha_n(1) = \lambda$ we obtain $\alpha (1) =\lambda$.
Moreover,
\begin{equation}\label{WKCVAPLZ} \lim_{n\to\infty} \int_0^1 \,
\alpha_n'\chi_{ \BBF{\alpha'<0} } \, d\rho = \int_0^1 \,
\alpha'\chi_{ \BBF{\alpha'<0} } \, d\rho.
\end{equation}
Since $\alpha_n'>0$ a.e. we obtain $\int_0^1 \,
\alpha'\chi_{ \BBF{\alpha'<0} } \, d\rho \geqslant 0$, and thus $m\BBF{\alpha'<0}=0$.
\par\medskip

Now, denote $A=\BBF{\rho\in(0,1):\,\alpha'=0}$ and show that
$m(A)=0$. We will argue by contradiction. Assume that
$m(A)=\varepsilon>0$. Then (\ref{WKCONV}) implies
\begin{equation}\label{WKCVAPEZ}
\lim_{n\to\infty} \int_0^1 \, \alpha_n'\chi_{A} \, d\rho =
\int_0^1 \, \alpha'\chi_{A} \, d\rho=0.
\end{equation}
Then, as $\alpha_n'>0$ a.e.,
$\lim_{n\to\infty}\int_0^1\,|\alpha_n'\chi_{A}|\,d\rho=0$. Hence
$\alpha_n'\chi_{A} \to 0$ in $L^1$. We extract a subsequence
$\BBF{\alpha_{n_k}'}$ such that $\alpha_{n_k}'\chi_{A}\to 0$ a.e.
 $\rho\in(0,1)$. Now, by Egoroff's theorem there exists a measurable set
$B \subset A$ such that $m\BBR{B}>\varepsilon/2$ and
$\alpha_{n_k}'\to 0$ uniformly on $B$.
Next, observe that
$$
\int_0^1 h (\alpha_{n_k}^\prime) d\rho \ge \int_{B} h
(\alpha_{n_k}^\prime) d\rho \ge m (B) \left ( \inf_{\rho \in B} h
(\alpha_{n_k}^\prime) \right ) =: m (B) \, \mu_{n_k}
$$
Since $ \mu_{n_k} \to \infty$ this contradicts \eqref{SUPI}. We conclude that $m(A)=0$.
\par\medskip

Next we prove $\alpha\geqslant 0$ a.e. $\rho\in(0,1)$. Again \eqref{WKCONV}$_1$ implies
\begin{equation}\label{WKCVALZ}
\lim_{n\to\infty} \int_0^1 \, \alpha_n\chi_{\BBF{\alpha<0}} \,
d\rho = \int_0^1 \, \alpha\chi_{\BBF{\alpha<0}} \, d\rho \; \ge  \; 0 \, ,
\end{equation}
and thus  $m\BBF{\alpha<0}=0$.
This concludes that $\alpha$ satisfies all restrictions of membership in
$\mathcal{A}_{\lambda}$.
\par\medskip

Next, by (A2) we get
\begin{equation}\label{CVXINQ1}
\begin{aligned}
\varphi(\beta_n\rho^{2/3})    \, &\geqslant \, \varphi(\beta\rho^{2/3})    + \varphi'(\beta\rho^{2/3})(\beta_n-\beta)\rho^{2/3},\\
\psi\BBR{{\alpha_n}/{\rho}}   \, &\geqslant \, \psi\BBR{{\alpha}/{\rho}}   + \psi'\BBR{{\alpha}/{\rho}}(\alpha_n-\alpha)/\rho,\\
g\BBR{\gamma_n/\rho^{2/3}}    \, &\geqslant \, g\BBR{\gamma/\rho^{2/3}}    + g'\BBR{\gamma/\rho^{2/3}}(\gamma_n-\gamma)/\rho^{2/3},\\
g\BBR{3\gamma_n'\rho^{1/3}/2} \, &\geqslant \, g\BBR{3\gamma'\rho^{1/3}/2} + g'\BBR{3\gamma'\rho^{1/3}/2}(\gamma_n'-\gamma)3\rho^{1/3}/2 \\
\end{aligned}
\end{equation}
a.e. $\rho\in(0,1)$. As
$(\alpha,\beta,\gamma,v),(\alpha_n,\beta_n,\gamma_n,v_n)\in X$,
from (A3) it follows that the right hand side of each of the
inequalities in (\ref{CVXINQ1}) are integrable and
\begin{equation}\label{DERSPACES}
\begin{aligned}
&\varphi'(\beta\rho^{2/3})    \in  L^{\f{3p}{3p-1}},\;\;\;
\psi'\BBR{{\alpha}/{\rho}}   \in  L^{\f{p}{p-1}},\\
&\tx{and} \;\;\;
g'\BBR{\gamma/\rho^{2/3}},g'\BBR{3\gamma_n'\rho^{1/3}/2} \in
L^{\f{q}{q-1}}.\\
\end{aligned}
\end{equation}\par\medskip

Take an arbitrary $0<\delta<1$ and set
$A_{\delta}=\BBF{\rho\in(0,1): \, \delta\leqslant \alpha'
\leqslant 1/\delta}$. Then by (A2)
\begin{equation}\label{CVXINQ2}
h(\alpha_n') \geqslant h(\alpha')\chi_{A_{\delta}} +
h'(\alpha')(\alpha_n'-\alpha')\chi_{A_{\delta}} \;\; \tx{a.e.
}\;\;\rho\in(0,1).
\end{equation}
Moreover, (A1) and (A2) together imply
\begin{equation*}
\begin{aligned}
0 \; \leqslant \; h(\alpha')\chi_{A_{\delta}}
&+|h'(\alpha')|   \chi_{A_{\delta}} \\
&\leqslant \;
2\max(h(\delta),h(1/\delta),|h'(\delta)|,|h'(1/\delta)|).
\end{aligned}
\end{equation*}
Hence
\begin{equation}\label{HHPSPACE}
h(\alpha')\chi_{A_{\delta}},\; h'(\alpha')\chi_{A_{\delta}} \in
L^{\infty} \, ,
\end{equation}
and we conclude that the right hand side of (\ref{CVXINQ2}) is
integrable.\par\medskip

Finally,
\begin{equation}\label{CVXINQ3}
(v_n-v_0)^2 \geqslant (v-v_0)^2 + 2(v-v_0)(v_n-v) \;\;\tx{a.e.}
\;\rho\in(0,1),
\end{equation}
where right hand side is integrable as $v,v_n,v_0 \in L^2$.
\par\medskip

Following the discussion above, (\ref{CVXINQ1})-(\ref{CVXINQ3})
imply
\begin{eqnarray*}
\begin{aligned}
I_n \geqslant \int_{0}^{1}& \f{1}{2}(v-v_0)^2
+\varphi(\beta\rho^{2/3})
+2\psi\BBR{{\alpha}/{\rho}} \\
&\txq{}+g\BBR{\gamma/\rho^{2/3}}+2g\BBR{3\gamma'\rho^{1/3}/2} \,
d\rho +\int_0^1 \, h(\alpha')\chi_{A_{\delta}}\,d\rho\\
+\int_0^1 & \,(v-v_0)(v_n-v)
+\varphi'(\beta\rho^{2/3})(\beta_n-\beta)\rho^{2/3}\\
&+2\psi'\BBR{{\alpha}/{\rho}}(\alpha_n-\alpha)/\rho
+g'\BBR{\gamma/\rho^{2/3}}(\gamma_n-\gamma)/\rho^{2/3}\\
&\txq{}+g'\BBR{3\gamma'\rho^{1/3}/2}(\gamma_n'-\gamma)3\rho^{1/3}
+h'(\alpha')\chi_{A_{\delta}}(\alpha_n'-\alpha')
 \,d\rho\\
= J &+ J_{\delta} + J_n .
\end{aligned}
\end{eqnarray*}
Then, letting $n\to \infty$, we obtain
\begin{equation*}
\infty \,>\,s\,=\,\liminf_{n\to\infty}I_n \,\geqslant\, J +
J_{\delta} + \liminf_{n\to\infty}J_n.
\end{equation*}
Now from (\ref{WKCONV}), (\ref{DERSPACES}), (\ref{HHPSPACE}),  and
$v-v_0\in L^2$ it follows that $\lim_{n\to\infty}J_n=0$ and hence
\begin{equation}\label{LWSMCTINT}
\infty \,>\,s\,=\,\liminf_{n\to\infty}I_n \,\geqslant\, J +
\int_0^1 \, h(\alpha')\chi_{A_\delta} \,d\rho.
\end{equation}
Now, as $\alpha'>0$ a.e. $\rho\in(0,1)$ and $\alpha'\in L^1$, the
set $\BBF{\alpha'=0}\bigcup\BBF{\alpha'=\infty}$ is of measure
zero and hence
\begin{equation}\label{HDELTALIM}
\lim_{\delta\to
0+}h(\alpha')\chi_{A_{\delta}}=h(\alpha')\chi_{\{0<\alpha'<\infty
\}}=h(\alpha')\;\;\tx{a.e.}\;\rho\in(0,1).
\end{equation}
Finally, let $\delta\to 0+$. Then from (\ref{LWSMCTINT}),
(\ref{HDELTALIM}) and Monotone Convergence Theorem it follows
\begin{equation*}
\infty \,>\,s\,=\,\liminf_{n\to\infty}I_n \,\geqslant\, J +
\int_0^1 \, h(\alpha')\,d\rho \,=\,I(\alpha,\beta,\gamma,v)
\end{equation*}
and hence (\ref{LWSMCT}) holds.
Since $(\alpha_n,\beta_n,\gamma_n,v_n)\in \mathcal{A}_{\lambda}$,
and the other constraints are linear, one easily checks that the limiting
$(\alpha,\beta,\gamma,v)\in \mathcal{A}_{\lambda}$.
\end{proof}\par\medskip

\begin{thm}[\bf Existence]\label{EXISTENCE}
There exists $(\alpha,\beta,\gamma,v) \in \mathcal{A}_{\lambda}$
satisfying
\begin{equation}
I(\alpha,\beta,\gamma,v)=\inf_{\mathcal{A}_{\lambda}}I(\bar{\alpha},\bar{\beta},\bar{\gamma},\bar{v}).
\end{equation}
\end{thm}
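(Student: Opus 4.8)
The plan is to apply the direct method of the calculus of variations, which at this point is essentially assembled from the preceding lemmas. First I would invoke Lemma \ref{NONEMP} to guarantee that $\mathcal{A}_{\lambda} \neq \emptyset$, so that the infimum
\[
m := \inf_{(\bar\alpha,\bar\beta,\bar\gamma,\bar v) \in \mathcal{A}_{\lambda}} I(\bar\alpha,\bar\beta,\bar\gamma,\bar v)
\]
is well-defined; moreover, since $I \geqslant 0$ on the admissible set and $I(\alpha_0,\beta_0,\gamma_0,0) < \infty$ by \eqref{IDATAPROP}, we have $0 \leqslant m < \infty$. Next I would select a minimizing sequence $\{(\alpha_n,\beta_n,\gamma_n,v_n)\} \subset \mathcal{A}_{\lambda}$ with $I(\alpha_n,\beta_n,\gamma_n,v_n) \to m$. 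By discarding finitely many terms we may assume $I(\alpha_n,\beta_n,\gamma_n,v_n) \leqslant m+1 =: M$ for all $n$, so the hypothesis \eqref{SUPI} of Lemma \ref{IBNDSEQLMM} holds.

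Then I would apply Lemma \ref{IBNDSEQLMM} to extract a subsequence (not relabeled) and a limit $(\alpha,\beta,\gamma,v) \in X$ for which all the weak convergences \eqref{WKCONV} hold. Since this subsequence is still minimizing and still satisfies \eqref{SUPI} and now \eqref{WKCONV}, the hypotheses of Theorem \ref{LWSMCONTTHM} are met. Applying that theorem gives on the one hand $(\alpha,\beta,\gamma,v) \in \mathcal{A}_{\lambda}$, so the candidate is admissible, and on the other hand
\[
I(\alpha,\beta,\gamma,v) \;\leqslant\; \liminf_{n\to\infty} I(\alpha_n,\beta_n,\gamma_n,v_n) \;=\; m,
\]
where the last equality holds because the subsequence is minimizing (the $\liminf$ of a convergent sequence equals its limit). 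Combined with the reverse inequality $I(\alpha,\beta,\gamma,v) \geqslant m$, which is immediate since $(\alpha,\beta,\gamma,v) \in \mathcal{A}_{\lambda}$ and $m$ is the infimum over $\mathcal{A}_{\lambda}$, we conclude $I(\alpha,\beta,\gamma,v) = m$, which is exactly the claim.

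In this setup there is essentially no remaining obstacle: all the analytic difficulty — the de la Vall\'ee Poussin compactness argument for $\alpha_n'$ in $W^{1,1}$, the coercivity estimates from (A3) furnishing the weak limits in $L^p$, $L^q$, $L^{3p}$, the propagation of the affine differential constraint $v' = (\beta-\beta_0)/3h$ to the limit, the verification via Egoroff and the superlinearity (A1) that $\alpha' > 0$ a.e. in the limit, and the convexity/lower-semicontinuity argument using the subdifferential inequalities \eqref{CVXINQ1}--\eqref{CVXINQ3} together with the Monotone Convergence Theorem to handle the singular term $h(\alpha')$ — has already been absorbed into Lemmas \ref{NONEMP}, \ref{IBNDSEQLMM} and Theorem \ref{LWSMCONTTHM}. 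The only point requiring a word of care is the passage from a general minimizing sequence to one satisfying the uniform bound \eqref{SUPI}, which is the trivial truncation noted above, and the observation that weak limits along a minimizing subsequence still realize the infimum. Thus the proof is a short three-line assembly of the machinery already in place.
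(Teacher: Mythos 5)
Your proposal is correct and follows essentially the same direct-method argument as the paper: nonemptiness via Lemma~\ref{NONEMP}, a bounded minimizing sequence, compactness from Lemma~\ref{IBNDSEQLMM}, and lower semicontinuity plus admissibility of the limit from Theorem~\ref{LWSMCONTTHM}. The minor bookkeeping you add (truncating to enforce \eqref{SUPI}, spelling out the finiteness of the infimum) is implicit in the paper's version, so there is no substantive difference.
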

\begin{proof}
As $\mathcal{A}_{\lambda}$ is nonempty, we can set
$s=\inf_{\mathcal{A}_{\lambda}}I(\bar{\alpha},\bar{\beta},\bar{\gamma},\bar{v})$.
Then by definition of $\mathcal{A}_{\lambda}$ we have
$I(\bar{\alpha},\bar{\beta},\bar{\gamma},\bar{v})<\infty$ for each
$ (\bar{\alpha},\bar{\beta},\bar{\gamma},\bar{v})\in
\mathcal{A}_{\lambda}$. This implies that $s$ is finite.\par

Next, by definition of $s$ there exists $
\BBF{(\alpha_n,\beta_n,\gamma_n,v_n)}_{n\in\mathbb{N}}\in
\mathcal{A}_{\lambda}$ such that $\lim_{n\to\infty}I_n=s$ with
$I_n=I(\alpha_n,\beta_n,\gamma_n,v_n)$. Then, as
$\BBF{I_n}_{n\in\mathbb{N}}$ is bounded, lemma \ref{IBNDSEQLMM}
and Theorem \ref{LWSMCONTTHM} imply that $\exists
(\alpha,\beta,\gamma,v)\in \mathcal{A}_{\lambda}$ satisfying
$I(\alpha,\beta,\gamma,v)\leqslant \liminf_{n\to\infty}I_n=s$. In
this case the definition of $s$ implies
$I(\alpha,\beta,\gamma,v)=s$.
\end{proof}\par\medskip

\begin{thm}[\bf Uniqueness]\label{UNIQUENESS}
The minimizer $(\alpha,\beta,\gamma,v)\in \mathcal{A}_{\lambda}$ of
$I$ over $\mathcal{A}_{\lambda}$ is unique.
\end{thm}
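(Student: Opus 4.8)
The plan is to establish uniqueness by a standard convexity argument: the functional $I$ is convex on the affine admissible set $\mathcal{A}_\lambda$, and is \emph{strictly} convex in the variable $\alpha'$ because $h'' > 0$ by (A2). First I would observe that $\mathcal{A}_\lambda$ is convex: the differential constraints $(\beta-\beta_0)/h = 3v'$, $(\alpha-\alpha_0)/h = 3\alpha_0^{2/3}v$, $(\gamma-\gamma_0)/h = 2\alpha_0^{1/3}v$ are affine in $(\alpha,\beta,\gamma,v)$, the boundary condition $\alpha(1)=\lambda$ is affine, and the conditions $\alpha(0)\geqslant 0$, $\alpha' > 0$ a.e. define a convex set; finiteness of $I$ is preserved under convex combinations by convexity of the integrand. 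Hence if $(\alpha_1,\beta_1,\gamma_1,v_1)$ and $(\alpha_2,\beta_2,\gamma_2,v_2)$ are both minimizers, so is their midpoint, and $I$ evaluated there equals $s$.

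Next I would exploit convexity of $G$ (equivalently of the integrand in the form \eqref{GPARTEXPL}) together with the quadratic term $\tfrac12(v-v_0)^2$. Writing the midpoint as $\bar w = \tfrac12(w_1 + w_2)$ componentwise, convexity of each of $\varphi$, $\psi$, $g$, $h$ and of $t\mapsto t^2$ gives
\begin{equation*}
I(\bar\alpha,\bar\beta,\bar\gamma,\bar v) \;\leqslant\; \tfrac12 I(\alpha_1,\beta_1,\gamma_1,v_1) + \tfrac12 I(\alpha_2,\beta_2,\gamma_2,v_2) = s,
\end{equation*}
with equality forcing equality in each pointwise convexity inequality a.e.\ $\rho$. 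From the strict convexity $h'' > 0$, equality in $h\bigl(\tfrac12\alpha_1' + \tfrac12\alpha_2'\bigr) = \tfrac12 h(\alpha_1') + \tfrac12 h(\alpha_2')$ forces $\alpha_1' = \alpha_2'$ a.e.; combined with $\alpha_1(1) = \alpha_2(1) = \lambda$ this yields $\alpha_1 = \alpha_2$. Then the constraint $(\gamma - \gamma_0)/h = 2\alpha_0^{1/3} v$ together with $(\alpha - \alpha_0)/h = 3\alpha_0^{2/3} v$ shows $v$ is determined by $\alpha$ (on the set where $\alpha_0 > 0$, and one argues separately or uses $\alpha_0' > 0$ to control the degenerate set), hence $v_1 = v_2$; similarly $(\beta - \beta_0)/h = 3v'$ gives $\beta_1 = \beta_2$, and $(\gamma - \gamma_0)/h = 2\alpha_0^{1/3} v$ gives $\gamma_1 = \gamma_2$.

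The cleanest route, which I would prefer, avoids case analysis on degenerate sets: the three differential constraints in $\mathcal{A}_\lambda$ express $\beta$, the \emph{differences} $\alpha - \alpha_0$ and $\gamma - \gamma_0$ affinely in terms of $v$ (and $v'$), so the entire tuple $(\alpha,\beta,\gamma,v)$ is an affine function of the single unknown $v$ alone — e.g.\ $\alpha = \alpha_0 + 3h\,\alpha_0^{2/3} v$, $\gamma = \gamma_0 + 2h\,\alpha_0^{1/3} v$, $\beta = \beta_0 + 3h\,v'$. Thus $I$ reduces to a convex functional of $v$, and it suffices to show two minimizing $v$'s coincide. Since $\alpha' = \alpha_0' + 3h\,(\alpha_0^{2/3} v)'$ depends on $v'$ (and $v$), strict convexity of $h$ gives $(\alpha_0^{2/3} v_1)' = (\alpha_0^{2/3} v_2)'$ a.e.; since both sides of $\alpha = \alpha_0 + 3h\alpha_0^{2/3}v$ satisfy $\alpha(1) = \lambda$ and $\alpha_0(1) = \lambda$, we get $\alpha_0^{2/3}(v_1 - v_2) \equiv 0$, and using $\alpha_0 > 0$ a.e.\ (which follows from $\alpha_0' > 0$ a.e.\ and $\alpha_0 \geqslant 0$, so $\alpha_0$ is strictly increasing and positive on a set of full measure) we conclude $v_1 = v_2$ a.e., whence the full tuples agree. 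The main obstacle — and the point needing care — is precisely this last step: handling the weight $\alpha_0^{2/3}$ (and $\alpha_0^{1/3}$) which may vanish at $\rho = 0$, and confirming that strict convexity of $h$ in $\alpha'$ genuinely propagates to strict convexity in $v$ despite $h$ being the only strictly convex ingredient; the quadratic term $\tfrac12(v-v_0)^2$, however, already supplies strict convexity of $I$ in $v$ directly, which makes the argument immediate once the reduction to $v$ as sole unknown is in place.
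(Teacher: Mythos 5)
Your proof is correct. Your first argument (strict convexity of $h$ forces $\alpha_1'=\alpha_2'$ a.e., the boundary condition $\alpha(1)=\lambda$ then gives $\alpha_1=\alpha_2$, and the affine constraints propagate this to $v,\beta,\gamma$) is essentially the argument in the paper, merely phrased directly instead of by contradiction on the measure of $\{\alpha'\neq\bar\alpha'\}$. You correctly flag and resolve the small gap the paper glosses over: passing from $\alpha_1=\alpha_2$ to $v_1=v_2$ requires $\alpha_0>0$ a.e., which indeed follows from $\alpha_0\geqslant 0$, $\alpha_0'>0$ a.e., and absolute continuity.

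Your ``cleanest route'' is a genuinely different and in fact simpler argument than the paper's. Once one notices that the three affine constraints express $(\alpha,\beta,\gamma)$ as explicit affine functions of $v$ alone, the functional $I$ becomes a convex functional of the single unknown $v$, and the hard-wired quadratic term $\tfrac12(v-v_0)^2$ already supplies the needed strict convexity: if $v_1\neq v_2$ on a set of positive measure, the midpoint has strictly smaller $I$. This avoids both the appeal to $h''>0$ and the division by $\alpha_0^{2/3}$, so it sidesteps the degeneracy issue entirely. The paper's choice to run the argument through $h$ is presumably because $h''>0$ is the naturally available strict-convexity hypothesis in the stored-energy setting and recurs later (e.g.\ in the regularity section), but for the uniqueness statement alone your alternative is the more economical proof.
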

\begin{proof}
We will argue by contradiction. Assume
$(\alpha,\beta,\gamma,v),(\bar{\alpha},
\bar{\beta},\bar{\gamma},\bar{v})\in \mathcal{A}_{\lambda}$
are two distinct minimizers. Then we consider
$(\f{\alpha+\bar{\alpha}}{2},\f{\beta+\bar{\beta}}{2},\f{\gamma+\bar{\gamma}}{2},
\f{v+\bar{v}}{2})$ and notice that it also belongs to $\mathcal{A}_{\lambda}$.
\par\smallskip

Define $A=\BBF{\rho\in(0,1): \, \alpha' \neq
\bar{\alpha}'}$. Then $mA>0$. Indeed, if $\alpha'=\bar{\alpha}'$
a.e., then $\alpha(1)=\bar{\alpha}(1)=\lambda$ implies
$\alpha=\bar{\alpha}$. In turn, this implies $v=\bar v'$, $\beta=\bar\beta$ and $\gamma = \bar \gamma$,
which  contradicts to the assumption that $(\alpha,\beta,\gamma,v)$ and
$(\bar{\alpha},\bar{\beta},\bar{\gamma},\bar{v})$ are
distinct.\par\medskip

Now, as $h''>0$, we have
\begin{equation*}
\f{h(\alpha') + h(\bar{\alpha}')}{2} \, > \,
h\BBR{\f{\alpha'+\bar{\alpha}'}{2}}, \;\;\; \rho\in A \, ,
\end{equation*}
and thus, as $mA$ is positive,
\begin{equation*}
\int_0^1 \, \f{h(\alpha') + h(\bar{\alpha}')}{2} \, d\rho \, >
\,\int_0^1 h\BBR{\f{\alpha'+\bar{\alpha}'}{2}} \, d\rho \, .
\end{equation*} \par\medskip

Let
$s=\inf_{\mathcal{A}_{\lambda}}I(\tilde{\alpha},\tilde{\beta},\tilde{\gamma},\tilde{v})$.
Then by the inequality above and convexity of $\varphi,\psi$ and
$g$ we obtain
\begin{equation}\label{THDMIN}
s=\f{I(\alpha,\beta,\gamma,v)+I(\bar{\alpha},\bar{\beta},\bar{\gamma},\bar{v})}{2}
\,>\,
I\BBR{\f{\alpha+\bar{\alpha}}{2},\f{\beta+\bar{\beta}}{2},\f{\gamma+\bar{\gamma}}{2},\f{v+\bar{v}}{2}} \, ,
\end{equation}
which, since
$\BBR{\f{\alpha+\bar{\alpha}}{2},\f{\beta+\bar{\beta}}{2},\f{\gamma+\bar{\gamma}}{2},\f{v+\bar{v}}{2}}\in
\mathcal{A}_{\lambda}$, contradicts the definition of $s$. Hence
$(\alpha,\beta,\gamma,v)=(\bar{\alpha},\bar{\beta},\bar{\gamma},\bar{v})$.
\end{proof}


\section{Euler-Lagrange Equations}
\label{seceullag}

Next, we show that the minimizer of $I$
satisfies the system (\ref{EXTDISCSYS}) a.e. $\rho\in(0,1)$. To
this end, in addition to (\ref{IDATAPROP}), we assume that the initial
iterate $(\alpha_0,\beta_0,\gamma_0,v_0)$ satisfies for each $\delta\in(0,1)$
\begin{equation}\label{IDATAPROP2}
{\alpha_0'} \in L^{3p}(\delta,1) \bigcap L^q(\delta,1).
\end{equation}\par\medskip


\begin{thm}[\bf Weak Form]\label{ELWEAK}
Let  $(\alpha,\beta,\gamma,v)\in \mathcal{A}_{\lambda}$ be the
minimizer of $I$ over $\mathcal{A}_{\lambda}$ and the initial iterate
$(\alpha_0,\beta_0,\gamma_0,v_0)$ satisfy (\ref{IDATAPROP}) and
(\ref{IDATAPROP2}). Let also
\begin{equation}\label{G1DEF}
G_1(\rho)=G_{,i}(\Xi) \Omega^i_{,1}(\Gamma^0)
\end{equation}
and
\begin{equation}\label{G2DEF}
G_2(\rho) =G_{,i}(\Xi) \BBR{\Omega^i_{,2}(\Gamma^0)
 +\Omega^i_{,3}(\Gamma^0)}
\end{equation}
Then, for each $\delta\in(0,1)$,
\begin{equation*}
\rho^{2/3}G_1(\rho)\in W^{1,1}(\delta,1) \, , \quad
\rho^{-1/3}G_2(\rho)\in L^1(\delta,1) \, ,
\end{equation*}
and for a.e. $\rho\in(0,1)$
\begin{equation}\label{G1ABSCONT}
3\rho^{2/3} G_1(\rho)=\int_1^{\rho}  \BBR{s^{-1/3}G_2(s) +
\f{v(s)-v_0(s)}{h} } ds + \tx{const.}
\end{equation}
Moreover, for each $\delta\in(0,1)$,
\begin{equation}\label{MINPROP2}
{\alpha}'\in L^{3p}(\delta,1) \bigcap L^q(\delta,1).
\end{equation}
\end{thm}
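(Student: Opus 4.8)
The plan is a direct calculus-of-variations argument. The key observation is that the three affine relations in the definition of $\mathcal{A}_\lambda$ express $\alpha,\beta,\gamma$ through $v$ by $\alpha=\alpha_0+3h\alpha_0^{2/3}v$, $\gamma=\gamma_0+2h\alpha_0^{1/3}v$, $\beta=\beta_0+3hv'$, so that the admissible variations reduce to directions $(\delta\alpha,\delta\beta,\delta\gamma,\delta v)=(3h\alpha_0^{2/3}\phi,\,3h\phi',\,2h\alpha_0^{1/3}\phi,\,\phi)$ with $\phi(1)=0$ and $\alpha'+\tau(\delta\alpha)'>0$ still holding a.e.\ for small $\tau$. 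I would compute the first variation of $I$ along such a direction, use minimality to get a weak Euler--Lagrange identity $\int_0^1[Q\phi+P\phi']\,d\rho=0$, where --- after unwinding \eqref{OMEGADEF2} and \eqref{GPARTEXPL} --- one checks that $P=3h\rho^{2/3}G_1$ and $Q=(v-v_0)+h\rho^{-1/3}G_2$ with $G_1,G_2$ as in \eqref{G1DEF}, \eqref{G2DEF}, and then apply a du Bois-Reymond argument to pass to the integrated form \eqref{G1ABSCONT}.

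First I would establish the regularity \eqref{MINPROP2}, which is needed even to make $G_1,G_2$ meaningful and which in fact holds for every element of $\mathcal{A}_\lambda$ once \eqref{IDATAPROP2} is assumed. From $v\in L^2(0,1)$ and $v'\rho^{2/3}\in L^{3p}(0,1)$ we get $v'\in L^{3p}(\delta,1)$, hence $v$ agrees a.e.\ with an absolutely continuous function on $[\delta,1]$, so $v\in L^\infty(\delta,1)$; and since $\alpha_0$ is strictly increasing with $0\le\alpha_0\le\lambda$, all powers $\alpha_0^{\pm1/3},\alpha_0^{\pm2/3}$ are bounded on $(\delta,1)$. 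Differentiating $\gamma=\gamma_0+2h\alpha_0^{1/3}v$ and using $\gamma'\rho^{1/3},\gamma_0'\rho^{1/3}\in L^q$ together with $\alpha_0'\in L^q(\delta,1)$ gives $v'\in L^q(\delta,1)$; differentiating $\alpha=\alpha_0+3h\alpha_0^{2/3}v$ then gives $\alpha'=\alpha_0'+2h\alpha_0^{-1/3}\alpha_0'v+3h\alpha_0^{2/3}v'\in L^{3p}(\delta,1)\cap L^q(\delta,1)$. These same facts, combined with (A3)--(A4), show that every term of $\rho^{2/3}G_1$ and $\rho^{-1/3}G_2$ \emph{except} the ones carrying the factor $h'(\alpha')$ lies in $L^1(\delta,1)$: $\varphi'(\beta\rho^{2/3})\in L^1$ since $\beta\rho^{2/3}\in L^{3p}$, $\psi'(\alpha/\rho)\in L^1$ since $\alpha/\rho\in L^p$, $g'(\gamma/\rho^{2/3})$ and $g'(\tfrac32\gamma'\rho^{1/3})$ are integrable since $\gamma/\rho^{2/3},\gamma'\rho^{1/3}\in L^q$, and the $\Omega^i_{,j}(\Gamma^0)$-multipliers are, on $(\delta,1)$, products of bounded powers of $\alpha_0$ and $\rho$ with $\beta_0=\alpha_0'\alpha_0^{-2/3}\in L^{3p}(\delta,1)\cap L^q(\delta,1)$, so Hölder closes the estimate.

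The core of the argument is the first variation. The difficulty is that $\alpha'>0$ is neither a closed constraint nor bounded below, and that there is no a priori growth bound on $h'$. I would handle this by the device of Ball \cite{Ba}: localize the perturbation to the sets $A_{\delta,N}=\{\rho\in(\delta,1):\delta\le\alpha'\le 1/\delta,\ \alpha_0'\le N\}$, on which both $h'(\alpha')$ and $(\delta\alpha)'$ are bounded, so that $\alpha+\tau\,\delta\alpha\in\mathcal{A}_\lambda$ for $|\tau|$ small; the test functions $\phi$ needed are built from these by approximation. Along such a direction, convexity of $h$ ($h''>0$) makes $\tfrac1\tau[h(\alpha'+\tau(\delta\alpha)')-h(\alpha')]$ nonincreasing as $\tau\downarrow0$ with pointwise limit $h'(\alpha')(\delta\alpha)'$, dominated above by the $L^1$ function $h(\alpha'+(\delta\alpha)')-h(\alpha')$, while the $\varphi,\psi,g$ terms converge by ordinary dominated convergence using the bounds above. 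Minimality then yields, as $\tau\downarrow0$, the one-sided inequality $0\le\int_0^1[Q\phi+P\phi']\,d\rho$; since all of $P,Q$ apart from the $h'(\alpha')$-pieces already lie in $L^1(\delta,1)$ and the right side is finite, the negative part of $h'(\alpha')(\delta\alpha)'$ is integrable. Replacing $\phi$ by $-\phi$ (admissible in the localized class) upgrades this to the equality $\int_0^1[Q\phi+P\phi']\,d\rho=0$ and, en route, gives $h'(\alpha')\in L^1(\delta,1)$, hence $\rho^{2/3}G_1\in L^1(\delta,1)$ and $\rho^{-1/3}G_2\in L^1(\delta,1)$.

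Finally, with $P,Q\in L^1_{loc}(0,1)$ and $\int_0^1[Q\phi+P\phi']\,d\rho=0$ valid for all $\phi\in C_0^\infty(0,1)$ (the localized class being extended by density once the integrability is in hand), the du Bois-Reymond lemma gives $P\in W^{1,1}_{loc}(0,1)$ with $P'=Q$ a.e., i.e.\ $3h\rho^{2/3}G_1(\rho)=\int_1^\rho\bigl(h\,s^{-1/3}G_2(s)+v(s)-v_0(s)\bigr)ds+\mathrm{const}$; dividing by $h$ is \eqref{G1ABSCONT}, and together with $\rho^{-1/3}G_2\in L^1(\delta,1)$ this gives $\rho^{2/3}G_1\in W^{1,1}(\delta,1)$ for every $\delta$. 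I expect the genuine obstacle to be concentrated in the third paragraph: producing honest competitors in $\mathcal{A}_\lambda$ that respect $\alpha'>0$, and extracting integrability of $h'(\alpha')$ from minimality together with $h''>0$ and (A1) in the absence of any growth control on $h'$.
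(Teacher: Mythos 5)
Your overall strategy matches the paper's: perturb along the directions built from the affine constraints, localize the perturbation to a set where $\alpha'$ is bounded away from $0$ and $\infty$ so that competitors stay in $\mathcal{A}_\lambda$ and $h'(\alpha')$ stays bounded, pass to the first variation, and then integrate up. Your identification of the coefficients $P=3h\rho^{2/3}G_1$ and $Q=(v-v_0)+h\rho^{-1/3}G_2$, and your separate derivation of \eqref{MINPROP2} from the affine constraints and \eqref{IDATAPROP2}, are both correct and parallel what the paper does (the paper proves \eqref{MINPROP2} last, but the argument is the same).

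The place where the proposal is incomplete is exactly the spot you flagged at the end: how the integrability of the $h'(\alpha')$--carrying piece of $P$ is obtained. You cannot first prove $P\in L^1_{\mathrm{loc}}$ and then invoke the classical du Bois--Reymond lemma, because the only information available about $h'(\alpha')$ away from the localized sets comes from the Euler--Lagrange relation itself; the one-sided inequality you derive only tests $h'(\alpha')$ where it is already bounded, so it does not ``en route'' give $h'(\alpha')\in L^1(\delta,1)$. The paper circumvents this by working directly with the coefficients $a,b$ in $\int(a\mu+b\mu')=0$: it shows that $a$ (which does \emph{not} involve $G_{,7}=h'$) is in $L^1(1/k,1)$ by a direct estimate using (A3) and \eqref{IDATAPROP2}, then deduces from the vanishing of $\int_{S_k}(-\int_1^\rho a + b)f\,d\rho$ for all mean-zero $f\in L^\infty(S_k)$ that $b=\int_1^\rho a+c_k$ a.e.\ on $S_k$; since $S_k\subset S_{k+1}$ forces $c_k=c_{k+1}$ and $\bigcup_k S_k$ has full measure, the identity $b=\int_1^\rho a+\mathrm{const}$ holds a.e.\ on $(0,1)$, and \emph{then} $b\in W^{1,1}(\delta,1)$ (hence integrability of $h'(\alpha')$) is a consequence. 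In other words: you should not aim to extend the test class by density after establishing $L^1_{\mathrm{loc}}$; you should instead read off the absolute continuity of $b$ directly from the localized identities and the exhaustion of $(0,1)$ by the $S_k$. Your monotone-convergence treatment of the $h$--difference quotient is a fine alternative to the paper's uniform bound on $S_k$, provided the localization guarantees $h(\alpha'+(\delta\alpha)')\in L^1$, which it does because $(\delta\alpha)'$ is supported where $\alpha'$ is bounded.
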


\begin{proof}
Fix $k\in\mathbb{N}$ and define $S_k=\{ \rho\in[1/k,1): \;
1/k<\alpha'<k \}$. Let $f\in L^{\infty}$ with
$\int_{S_k}\,f\, d\rho=0$. We denote by $\chi_k=\chi_{_{S_k}}$,
$l_k=\alpha_0(1/k)$ and set
\begin{equation}\label{MUDEF}
\mu(\rho)=\int_0^{\rho}\, \chi_k(s)f(s)\,ds.
\end{equation} \par\medskip

Before proceeding further we make the following remark. Let
$t\in\RR$ and $F(x)=x^t,x\in\RR_{+}$. Take $\delta\in(0,1)$. Then,
as $\alpha_0 \in W^{1,1}$, $\alpha_0 \geqslant 0$ and ${\alpha_0'}>0$
a.e. $\rho\in(0,1)$ we must have
$0<\alpha_0(\delta)\leqslant\alpha_0\leqslant\lambda$ for all
$\rho\in(\delta,1)$. Hence $|F'(\alpha_0)|\leqslant
t\BBR{\alpha_0(\delta)+\lambda}^{t-1}$ for all
$\rho\in(\delta,1)$.  Therefore we conclude that for each
$t\in\RR$ and $\delta\in(0,1)$
\begin{equation}\label{ALPHAZTDER}
{\alpha_0}^t \in W^{1,1}(\delta,1) \;\;\;\tx{with}\;\;\;
\f{d}{d\rho}\BBR{{\alpha_0}^t}=t{\alpha_0}^{t-1}{\alpha_0'}.
\end{equation}\par\medskip

(i) {\it Step 1. Definition of the variation}.
For $|\eps|<\f{1}{6k(\|f\|_{\infty}+1)}\,$
we define $(\alpha_{\eps},\beta_{\eps},\gamma_{\eps},v_{\eps})$ by
\begin{equation}\label{EPSELTDEF}
\begin{aligned}
v_{\eps} &= v + \eps\f{\mu}{h{\alpha_0}^{2/3}}\\
\alpha_{\eps} &= \alpha_0 + h\BBR{ 3v_{\eps}{\alpha_0}^{2/3} } =
\alpha +
3\eps\mu\\
\beta_{\eps} &= \beta_0 + h\BBR{ 3v_{\eps}'} = \beta + 3\eps
\BBR{\f{\mu}{{\alpha_0}^{2/3}}}'\\
\gamma_{\eps} &= \gamma_0 + h \BBR{2v_{\eps}{\alpha_0}^{1/3}} =
\gamma + 2 \eps \f{\mu}{{\alpha_0}^{1/3}} \, .
\end{aligned}
\end{equation}
Due to (\ref{ALPHAZTDER}), $(\alpha_{\eps},\beta_{\eps},\gamma_{\eps},v_{\eps})$ is
well-defined. We next prove:

\medskip
\begin{lmm}
The variation $(\alpha_{\eps},\beta_{\eps},\gamma_{\eps},v_{\eps})\in
\mathcal{A}_{\lambda}$.
\end{lmm}

\begin{proof}
 First, we notice that
\begin{equation}\label{EQLONINT}
(\alpha_{\eps},\beta_{\eps},\gamma_{\eps},v_{\eps}) =
(\alpha,\beta,\gamma,v) \;\;\; \tx{if} \;\;\; \rho\in (0,1/k).
\end{equation}
Then we check that
\begin{equation*}
\alpha_{\eps}(1) = \alpha(1) + 3 \eps \int_{S_k} \,
f(s)\,ds=\lambda.
\end{equation*}
Next, we see that $\alpha_{\eps}'=\alpha' + 3\eps \chi_k f$ and
therefore
\begin{equation}\label{AEPSPPROP}
\begin{aligned}
\alpha_{\eps}' = \alpha', \;\;\; \rho\notin S_k,\\
\f{1}{2k} \leqslant \alpha_{\eps}' \leqslant k+1,  \;\;\;\rho\in S_k.\\
\end{aligned}
\end{equation}
This implies that $\alpha_{\eps}>0$ a.e. $\rho\in(0,1)$ and hence
(\ref{EQLONINT}) implies $\alpha_{\eps}\geqslant0$.\par\medskip

Now we make the following estimates. First, we see that
\begin{equation*}\label{MUESTIM}
|\mu'|+\BBA{\f{\mu}{\rho}} +
\BBA{\f{\mu}{\rho^{2/3}{\alpha_0}^{1/3}}} +
\BBA{\f{\mu}{h{\alpha_0}^{2/3}}} \leqslant
\BBN{f}_{\infty}\BBR{1+k
 + \f{k^{2/3}}{l_k^{1/3}} + \f{1}{h l_k^{2/3}}}
\end{equation*}
and for $j=1,2$
\begin{equation*}\label{MUPESTIM}
\BBA{\BBR{\f{\mu}{{\alpha_0}^{j/3}}}'} \leqslant \BBN{f}_{\infty}
\BBR{ l_k^{-j/3} + l_k^{-(1+j/3)}\BBA{{\alpha_0'}}}.
\end{equation*}
Thus we conclude that there exists $C$ such that $\forall \rho \in
(1/k,1)$
\begin{equation}\label{EPSELTEST1}
|\alpha_{\eps}'- \alpha' | +|\alpha_{\eps}/\rho - \alpha/\rho |
+|\gamma_{\eps}/\rho^{2/3}- \gamma/\rho^{2/3} |+|v - v_{\eps}|
\leqslant \eps C
\end{equation}
and
\begin{equation}\label{EPSELTEST2}
|\beta_{\eps} \rho^{2/3}- \beta\rho^{2/3} |
+|\gamma_{\eps}'\rho^{1/3}- \gamma'\rho^{1/3} | \leqslant \eps C
\BBR{ 1 + |{\alpha_0'}|}.
\end{equation}
As $(\alpha,\beta,\gamma,v)\in X$, the last two inequalities imply
$(\alpha_{\eps},\beta_{\eps},\gamma_{\eps},v_{\eps})\in
X$.\par\medskip

Further, by (A3), (\ref{EPSELTEST1}) and (\ref{EPSELTEST2}) we
conclude that there exists $C$ such that for all $\rho\in (1/k,1)$
\begin{equation*}
\begin{aligned}
\psi(\alpha_{\eps}/\rho)         &\,\leqslant\, C\BBR{|\alpha/\rho|^p+1}\\
\varphi(\beta_{\eps}\rho^{2/3})  &\,\leqslant\, C\BBR{|\beta\rho^{2/3}|^{3p}+|{{\alpha_0'}}|^{3p}+1} \\
g(\gamma_{\eps}/\rho^{2/3})      &\,\leqslant\, C\BBR{|\gamma/\rho^{2/3}|^q+1}\\
g(3\gamma_{\eps}'\rho^{1/3}/2)   &\,\leqslant\, C\BBR{|\gamma'\rho^{1/3}|^q + |{{\alpha_0'}}|^{q}+1}. \\
\end{aligned}
\end{equation*}
By (\ref{AEPSPPROP}) we also have
\begin{equation}\label{HEPSPPROP}
\begin{aligned}
h(\alpha_{\eps}') &= h(\alpha'), \;\;\; \rho\notin {S_k},\\
h(\alpha_{\eps}') &\leqslant \max_{\f{1}{2k} \leqslant \delta \leqslant k}|{h(\delta)}|=M_k,  \;\;\;\rho\in S_k\\
\end{aligned}
\end{equation}
and hence
\begin{equation}
h(\alpha_{\eps}') \leqslant h(\alpha') + M_k, \;\;\;\rho \in
(0,1).
\end{equation} \par\medskip

Now, similarly to (\ref{XIDEFDISC}), set
\begin{equation}
\Xi_{\eps}= \;\BBR{\beta_{\eps} \rho^{2/3},
\f{\alpha_{\eps}}{\rho},\f{\alpha_{\eps}}{\rho}, \f{\gamma_{\eps}}
{\rho^{1/3}}, \f{3\gamma_{\eps}'}{2}\rho^{2/3},
\f{3\gamma_{\eps}'}{2}\rho^{2/3},\alpha_{\eps}'\rho^{2/3}}.
\end{equation}
Then, by the discussion above, it follows that
\begin{equation}\label{GEPSPPROP1}
G(\Xi_{\eps})+\f{(v_{\eps}-v_0)^2}{2}= G(\Xi)+\f{(v-v_0)^2}{2},
\;\;\; \rho\in (0,1/k) \, ,
\end{equation}
and there exists $C$ such that for $\rho\in (1/k,1)$
\begin{equation}\label{GEPSPPROP2}
\begin{aligned}
G(\Xi_{\eps})+\f{(v_{\eps}-v_0)^2}{2} \leqslant C&\left(1+|\beta\rho^{2/3}|^{3p}+|{{\alpha_0'}}|^{3p}+|\alpha/\rho|^p +|\gamma/\rho^{2/3}|^q\right. \\
&\left.+|\gamma'\rho^{1/3}|^q+ |{{\alpha_0'}}|^{q} + |v|^2 + |v_0|^2+h(\alpha')\right).\\
\end{aligned}
\end{equation}
As $I(\alpha,\beta,\gamma,v)<\infty$, (\ref{GEPSPPROP1}) and
(\ref{GEPSPPROP2}) imply
$I(\alpha_{\eps},\beta_{\eps},\gamma_{\eps},v_{\eps})<\infty$ and
hence by construction and the above discussion we get
$(\alpha_{\eps},\beta_{\eps},\gamma_{\eps},v_{\eps})\in
\mathcal{A}_{\lambda}$.
\end{proof} \par\medskip

\medskip
{\it Step 2}. The next objective is to validate the formal identity
\begin{equation}
\label{IDERIV}
\begin{aligned}
\left.\f{d}{d\eps}I(\alpha_{\eps},\beta_{\eps},\gamma_{\eps},v_{\eps})\right|_{\eps=0}=\int_0^1
 \left. \f{d}{d\eps}\BBR{\f{(v_{\eps}-v_0)^2}{2}+G(\Xi_{\eps})}
\right|_{\eps=0} \, d\rho=0.
\end{aligned}
\end{equation}\\
This will require several detailed estimations presented below.

\medskip
At this point, let us make estimates of the following difference
quotients on the interval  $\rho\in (1/k,1)$. First, by
(\ref{EPSELTEST1}) we get
\begin{equation}\label{VDQ}
\begin{aligned}
\f{1}{\eps}|(v_{\eps}-v_0)^2-(v-v_0)^2| =& \f{1}{\eps}
|v_{\eps}-v||v_{\eps}+v-2v_0|\\
& \leqslant  C\BBR{|v|+|v_0|+1}.
\end{aligned}
\end{equation}
Further, by the Mean Value Theorem
\begin{equation*}
\f{1}{\eps}|\varphi(\beta_{\eps}\rho^{2/3})-
\varphi(\beta\rho^{2/3})|=\f{1}{\eps}|\varphi'(\tau_{\eps})|
|\beta_{\eps}\rho^{2/3}- \beta\rho^{2/3}|,
\end{equation*}
where $\min(\beta,\beta_{\eps})\rho^{2/3} \leqslant \tau_{\eps}
\leqslant \max(\beta,\beta_{\eps})\rho^{2/3}$. Hence from
(\ref{EPSELTEST2}) it follows $|\tau_{\eps}| \leqslant
|\beta\rho^{2/3}|+\eps C (|{\alpha_0'}|+1)$ and therefore (A4)
implies
\begin{equation*}
|\varphi'(\tau_{\eps})| \leqslant
C\BBR{|\beta\rho^{2/3}|^{3p-1}+|{\alpha_0'}|^{3p-1}+1}.
\end{equation*}
Thus
\begin{equation}\label{PHIBETADQ}
\begin{aligned}
\f{1}{\eps}|\varphi(\beta_{\eps}\rho^{2/3})-
&\varphi(\beta\rho^{2/3})|\\
&\leqslant
C\BBR{|\beta\rho^{2/3}|^{3p-1}+|{\alpha_0'}|^{3p-1}+1}\BBR{|{\alpha_0'}|+1}.
\end{aligned}
\end{equation}
Similarly,
\begin{equation*}
\f{1}{\eps}|\psi(\alpha_{\eps}/\rho) -
\psi(\alpha/\rho)|=\f{1}{\eps}|\psi'(\tau_{\eps})||\alpha_{\eps}/\rho-\alpha/\rho|,
\end{equation*}
where $\min(\alpha_{\eps},\alpha)/\rho \leqslant \tau_{\eps}
\leqslant \max(\alpha_{\eps},\alpha)/\rho$. Hence
$|\tau_{\eps}|\leqslant |\alpha/\rho|+\eps C$ and  (A4)
implies
\begin{equation*}
|\psi'(\tau_{\eps})| \leqslant C \BBR{(|\alpha/\rho|+1 )^{p-1}+1}
\end{equation*}
and hence
\begin{equation}\label{PSIALPHADQ}
\f{1}{\eps}|\psi(\alpha_{\eps}/\rho) - \psi(\alpha/\rho)|
\leqslant C \BBR{(|\alpha/\rho|+1 )^{p-1}+1}.
\end{equation}
Next,
\begin{equation*}
\f{1}{\eps} |g(\gamma_{\eps}/\rho^{2/3}) - g(\gamma/\rho^{2/3})|=
\f{1}{\eps}|g'(\tau_{\eps})||\gamma_{\eps}/\rho^{2/3} -
\gamma/\rho^{2/3}|,
\end{equation*}
where $\min(\gamma_{\eps},\gamma)/\rho^{2/3} \leqslant
|\tau_{\eps}| \leqslant \max(\gamma_{\eps},\gamma)/\rho^{2/3}$ and
hence $|\tau_{\eps}| \leqslant |\gamma/\rho^{2/3}|+\eps C$. Then
by (A4)
\begin{equation*}
|g'(\tau_{\eps})| \leqslant C \BBR{
(|\gamma/\rho^{2/3}|+1)^{q-1}+1}
\end{equation*}
and hence
\begin{equation}\label{GGAMMADQ}
\f{1}{\eps}|g(\gamma_{\eps}/\rho^{2/3}) - g(\gamma/\rho^{2/3})|
\leqslant C \BBR{(|\gamma/\rho^{2/3}|+1 )^{q-1}+1}.
\end{equation}
Further,
\begin{equation*}
\f{1}{\eps} |g(3\gamma_{\eps}'\rho^{1/3}/2) -
g(3\gamma'\rho^{1/3}/2)|=
\f{3}{2\eps}|g'(\tau_{\eps})||\gamma_{\eps}'\rho^{1/3} -
\gamma'\rho^{1/3}|,
\end{equation*}
where $\f{3}{2}\min(\gamma_{\eps}',\gamma')\rho^{1/3}\leqslant
|\tau_{\eps}| \leqslant
\f{3}{2}\max(\gamma_{\eps}',\gamma')\rho^{1/3}$. Hence we must
have $$|\tau_{\eps}| \leqslant
\f{3}{2}\BBR{|\gamma'\rho^{1/3}|+\eps C (|{\alpha_0'}|+1)}.$$ Then
(A4) implies
\begin{equation*}
|g'(\tau_{\eps})| \leqslant C \BBR{
(|\gamma'\rho^{1/3}|+|{\alpha_0'}|+1)^{q-1}+1}
\end{equation*}
and hence
\begin{equation}\label{GGAMMAPDQ}
\begin{aligned}
\f{1}{\eps}|g(3\gamma_{\eps}'\rho^{1/3}/2) - &g(3\gamma'\rho^{1/3}/2)|\\
&\leqslant C \BBR{
(|\gamma'\rho^{1/3}|+|{\alpha_0'}|+1)^{q-1}+1}\BBR{|{\alpha_0'}|+1}.
\end{aligned}
\end{equation}
Finally, if $\rho\notin S_k$, then
$\f{1}{\eps}|h(\alpha_{\eps}')-h(\alpha')|=0$ and if $\rho \in
S_k$, we get
\begin{equation*}
\f{1}{\eps}|h(\alpha_{\eps}')-h(\alpha')|=\f{1}{\eps}|h'(\tau_{\eps})||\alpha_{\eps}'-\alpha'|,
\end{equation*}
where $\min(\alpha_{\eps}',\alpha') \leqslant \tau_{\eps}
\leqslant  \max(\alpha_{\eps}',\alpha)$. Then by
(\ref{AEPSPPROP})$_2$ we get $\f{1}{2k}\leqslant\tau_{\eps}
\leqslant k+1$ and hence
\begin{equation*}
|h'(\tau_{\eps})| \leqslant \max_{\f{1}{2k}
\leqslant\delta\leqslant k+1}|h'(\delta)|.
\end{equation*}
Thus by (\ref{EPSELTEST1}) we conclude that for $\rho\in (1/k,1)$
\begin{equation}\label{HALPHAPDQ}
\f{1}{\eps}|h(\alpha_{\eps}')-h(\alpha')|\leqslant C.
\end{equation}\par\medskip
Thus (\ref{GEPSPPROP1}),(\ref{VDQ})-(\ref{HALPHAPDQ}) and the
assumptions on the initial iterate (\ref{IDATAPROP}) and
(\ref{IDATAPROP2}) imply that
\begin{equation*}
\begin{aligned}
&\f{1}{\eps} \BBA{ G(\Xi_{\eps}) + \f{(v_{\eps}-v_0)^2}{2} -
G(\Xi)-\f{(v-v_0)^2}{2} }
\end{aligned}
\end{equation*}
is bounded on $(0,1)$ by an integrable function.
Letting $\eps\to 0$, and using the Dominated Convergence theorem, (A2) and the
fact that $(\alpha,\beta,\gamma,v)$ is the minimizer, we obtain the identity \eqref{IDERIV}.

{\it Step 3. Conclusion of the computation}.
The last step is to compute the right hand side of \eqref{IDERIV}.
Note first that
\begin{equation*}
\begin{aligned}
\f{d\Xi^1_{\eps}}{d\eps} &= \f{d}{d\eps} \beta_{\eps}\rho^{2/3}                  = 3\BBR{\f{\mu}{{\alpha_0}^{2/3}}}'\rho^{2/3} \\
\f{d\Xi^2_{\eps}}{d\eps} &= \f{d\Xi^3_{\eps}}{d\eps} =\f{d}{d\eps} \BBR{\f{\alpha_{\eps}}{\rho}}           = \f{3\mu}{\rho}\\
\f{d\Xi^4_{\eps}}{d\eps} &= \f{d}{d\eps} \BBR{\f{\gamma_{\eps}}{\rho^{1/3}}}     = \f{2\mu}{{\alpha_0}^{1/3}\rho^{1/3}}  \\
\f{d\Xi^5_{\eps}}{d\eps} &= \f{d\Xi^6_{\eps}}{d\eps}=\f{d}{d\eps} \BBR{ \f{3}{2}\gamma_{\eps}'\rho^{2/3}} = 3\BBR{\f{\mu}{{\alpha_0}^{1/3}}}'\rho^{2/3} \\
\f{d\Xi^7_{\eps}}{d\eps} &= \f{d}{d\eps} \BBR{ \alpha_{\eps}'\rho^{2/3}}         = 3\mu'\rho^{2/3}\\
\end{aligned}
\end{equation*}
and
\begin{equation*}
\f{dv_{\eps}}{d\eps}=\f{\mu}{h{\alpha_0}^{2/3}}.
\end{equation*}\\
Then the integrand in (\ref{IDERIV}) is expressed by
\begin{equation*}
\begin{aligned}
\left.(v-v_0)\f{dv_{\eps}}{d\eps}\right|_{\eps=0}+G_{,i}(\Xi)\left.\f{d\Xi^i_{\eps}}{d\eps}\right|_{\eps=0}=a\mu+b\mu',
\end{aligned}
\end{equation*}
where
\begin{equation}\label{ALATDEF}
\begin{aligned}
a(\rho)&=-G_{,1}(\Xi) \f{2{\alpha_0'}}{{\alpha_0}^{5/3}} \rho^{2/3}+G_{,2}(\Xi)\f{3}{\rho}+G_{,3}(\Xi)\f{3}{\rho}\\
&+G_{,4}(\Xi)\f{2}{{\alpha_0}^{1/3}\rho^{1/3}}-\BBR{G_{,5}(\Xi)+G_{,6}(\Xi)}
\f{{\alpha_0'}}{{\alpha_0}^{4/3}}
\rho^{2/3}+\f{(v-v_0)}{h{\alpha_0}^{2/3}}
\end{aligned}
\end{equation}
and
\begin{equation}\label{BLATDEF}
\begin{aligned}
b(\rho)=\f{3\rho^{2/3}}{{\alpha_0}^{2/3}}\BBR{G_{,1}(\Xi)+G_{,5}(\Xi){\alpha_0}^{1/3}+G_{,6}(\Xi){\alpha_0}^{1/3}+G_{,7}(\Xi){\alpha_0}^{2/3}}.
\end{aligned}
\end{equation}
Thus by (\ref{IDERIV}) we have $(a\mu + b \mu')\in L^1$ and
\begin{equation}\label{IDERIVVIAAB}
\int_{1/k}^{1} \, (a\mu + b \mu') \, d\rho =0.
\end{equation} \par\smallskip
Now, we claim $a\in L^1(1/k,1)$. By (A3) and definition
(\ref{GPART}) of $G$ it follows that for $\rho\in (1/k,1)$

\begin{equation*}
\begin{aligned}
\BBA{G_{,1}(\Xi)\f{{\alpha_0'}}{{\alpha_0}^{5/3}} \rho^{2/3}}
\leqslant
\BBA{\f{{\varphi'(\beta\rho^{2/3})\alpha_0}'}{l_k^{5/3}}}
\leqslant C\BBR{ |\beta\rho^{2/3}|^{3p-1}+1}|{\alpha_0'}|,
\end{aligned}
\end{equation*}

\begin{equation*}
\begin{aligned}
\f{1}{\rho} \BBA{G_{,2}(\Xi)+G_{,3}(\Xi) }\leqslant
2k\BBA{\psi'(\alpha/\rho)} \leqslant C \BBR{|\alpha/\rho|^{p-1}+1
},
\end{aligned}
\end{equation*}
and
\begin{equation*}
\begin{aligned}
\BBA{G_{,4}(\Xi)\f{1}{{\alpha_0}^{1/3}\rho^{1/3}}} \leqslant
\f{k^{1/3}}{l_k^{1/3}} \BBA{g'(\gamma/\rho^{2/3})} \leqslant C
\BBR{ |\gamma/\rho^{2/3}|^{q-1}+1}.
\end{aligned}
\end{equation*}
As the right hand sides of the inequalities above are integrable
on $(1/k,1)$ we have $a\in L^1(1/k,1)$ and this, in turn,
implies $b \, \mu' \in L^1(1/k,1)$. Now, we set $z(\rho)=\int_1^{\rho}
\,a(s)\,ds$ for $\rho\in (1/k,1)$. Then $z$ is absolutely
continuous and so is $\mu z$. As $(\mu z)|_{\rho=1/k}=(\mu
z)|_{\rho=1}=0$ we get
\begin{equation*}
0=\int_{1/k}^{1} \, (\mu z)'\,d\rho= \int_{1/k}^1
\BBR{\mu'\int_1^{\rho} \,a(s)\,ds + \mu a}  \,d\rho.
\end{equation*}
Then (\ref{IDERIVVIAAB}) becomes
\begin{equation}\label{IDERIVVIAAB2}
\int_{S_k} \, \BBR{-\int_1^{\rho} \,a(s)\,ds + b }f \, d\rho =0.
\end{equation} \par\smallskip
By the properties of $f$ we obtain that for some constant
$c_k$
\begin{equation*}
b-\int_1^{\rho} \, a(s)\,ds=c_k \;\; a.e. \;\rho\in S_k.
\end{equation*}
As $k$ was arbitrary, the above equality is valid for all $k\in
\mathbb{N}$. In this case $S_k \subset S_{k+1}$ implies that
$c_k=c_{k+1}$. As $\bigcup_k S_k=\{\rho\in (0,1):\,
0<\alpha'<\infty\}$ and $m\BBR{(0,1)\backslash \bigcup_k S_k}=0$,
we conclude
\begin{equation}\label{ABSCONTB}
b-\int_1^{\rho} \, a(s)\,ds=\tx{const.} \;\; \tx{a.e.} \;\rho\in
(0,1).
\end{equation}
Now, let us fix $\delta\in(0,1)$. By the above argument $a\in L^1(\delta,1)$
and (\ref{ABSCONTB}) implies $b\in W^{1,1}(\delta,1)$ with
the weak derivative $b'=a$. Moreover, by (\ref{ALPHAZTDER}) we
have ${\alpha_0}^{2/3}\in W^{1,1}(\delta,1)$ and hence
$b{\alpha_0}^{2/3}\in W^{1,1}(\delta,1)$. At this point, we
compute
\begin{equation*}
D\Omega(\Gamma^0)=
\begin{bmatrix}
1& 0                              &0                              & 0                &{\alpha_0}^{1/3}                       & {\alpha_0}^{1/3}                    &{\alpha_0}^{2/3}   \\
0& 3\BBR{\f{\alpha_0}{\rho}}^{2/3}&0                              & {\alpha_0}^{1/3} &0                                      &\f{{\alpha_0'}\rho}{{\alpha_0}^{2/3}}&\f{{\alpha_0'}\rho}{{\alpha_0}^{1/3}}  \\
0& 0                              &3\BBR{\f{\alpha_0}{\rho}}^{2/3}& {\alpha_0}^{1/3} & \f{{\alpha_0'}\rho}{{\alpha_0}^{2/3}} &0                                    &\f{{\alpha_0'}\rho}{{\alpha_0}^{1/3}}   \\
\end{bmatrix}
\end{equation*}\\
and notice that definitions (\ref{ALATDEF}) and (\ref{BLATDEF})
of $a$ and $b$ imply
\begin{equation*}
\begin{aligned}
b{\alpha_0}^{2/3}=3\rho^{2/3}G_{,i}(\Xi)\Omega^i_{,1}(\Gamma^0)=3\rho^{2/3}G_1(\rho)
\end{aligned}
\end{equation*}
while its weak derivative is expressed as
\begin{equation*}
\begin{aligned}
\f{d}{d\rho}b{\alpha_0}^{2/3}&=a{\alpha_0}^{2/3}+b\f{2{\alpha_0'}}{3{\alpha_0}^{1/3}}=\\
&=\rho^{-1/3}G_{,i}(\Xi)\BBR{\Omega^i_{,2}(\Gamma^0)+\Omega^i_{,3}(\Gamma^0)}+\f{v-v_0}{h}\\
&=\rho^{-1/3}G_2(\rho)+\f{v-v_0}{h}.
\end{aligned}
\end{equation*}
We conclude that, for $\delta\in(0,1)$,
\begin{equation}
\rho^{2/3}G_1(\rho)\in W^{1,1}(\delta,1) \, , \quad
\rho^{-1/3}G_2(\rho)\in L^1(\delta,1) \, ,
\end{equation}
and for almost every $\rho\in(0,1)$
\begin{equation}
3\rho^{2/3}G_1(\rho)= \int_1^{\rho}
\BBR{s^{-1/3}G_2(s)+\f{v(s)-v_0(s)}{h} } ds + \tx{const}.
\end{equation}\par\medskip

Finally, to prove (\ref{MINPROP2}), we compute
\begin{equation*}
\begin{aligned}
\BBR{\alpha-{\alpha_0}}'&=h\BBR{3{\alpha_0}^{2/3}v}'=h\BBR{\f{2{\alpha_0'}}{{\alpha_0}^{1/3}}v+3{\alpha_0}^{2/3}v'}\\
&=(\alpha-\alpha_0)\f{2{\alpha_0'}}{3\alpha_0}+(\beta-\beta_0){{\alpha_0}^{2/3}}\\
\end{aligned}
\end{equation*}
and hence
\begin{equation}\label{ALPHAPEST1}
\begin{aligned}
\alpha'=\f{\alpha_0'}{3}\BBR{1+\f{2\alpha}{\alpha_0}}+(\beta-\beta_0){\alpha_0}^{2/3}.\\
\end{aligned}
\end{equation}
Similarly,
\begin{equation*}
\begin{aligned}
\BBR{\gamma-{\gamma_0}}'&=h\BBR{2{\alpha_0}^{1/3}v}'=\f{2}{3}\BBR{\f{\alpha-\alpha_0}{{\alpha_0}^{1/3}}}'\\
&=\f{2}{3{\alpha_0}^{1/3}}\BBR{\alpha'-\f{{\alpha_0'}}{3}\BBR{2+\f{\alpha}{\alpha_0}}}\\
\end{aligned}
\end{equation*}
and hence
\begin{equation}\label{ALPHAPEST2}
\begin{aligned}
\alpha'=\f{\alpha_0'}{3}\BBR{2+\f{\alpha}{\alpha_0}}+\f{3}{2}(\gamma'-{\gamma_0}'){\alpha_0}^{1/3}.
\end{aligned}
\end{equation}\\
Now, take $\delta\in(0,1)$. Then from (\ref{ALPHAPEST1}) and
(\ref{ALPHAPEST2}) it follows that for all $\rho\in(\delta,1)$
\begin{equation}
\begin{aligned}
|\alpha'| &\leqslant \f{|{\alpha_0'}|}{3}\BBR{1+
\f{2\lambda}{\alpha_0(\delta)}} +|\beta-\beta_0|
{\lambda}^{2/3}\\
\end{aligned}
\end{equation}
and
\begin{equation}
\begin{aligned}
|\alpha'| &\leqslant
\f{|{\alpha_0'}|}{3}\BBR{2+\f{\lambda}{\alpha_0(\delta)}}+\f{3}{2}|\gamma'-{\gamma_0}'|{\lambda}^{1/3}.
\end{aligned}
\end{equation}\\
Since $\delta$ is arbitrary and $\beta-\beta_0\in
L^{3p}(\delta,1)$, $\gamma'-\gamma_0' \in L^q(\delta,1)$, the
assumption (\ref{IDATAPROP2}) and last two inequalities imply that
for each $\delta\in (0,1)$
\begin{equation}
\alpha' \in L^{3p}(\delta,1) \bigcap L^q(\delta,1).
\end{equation}
This completes the proof.
\end{proof}\par


\section{Regularity}\label{secreg}

First, we claim that for each representative of the minimizer
$(\alpha,\beta,\gamma,v)\in \mathcal{A}_{\lambda}$ in the theorem
(\ref{ELWEAK}) we can alter $\alpha'$ on a set of measure zero
such that functions $G_1$ and $G_2$ defined in (\ref{G1DEF}) and
(\ref{G2DEF}) satisfy
\begin{equation*}
3\rho^{2/3} G_1(\rho)=\int_1^{\rho} \,s^{-1/3}G_2(s) +
\f{v(s)-v_0(s)}{h} \,ds + C_0, \;\; \tx{for all}\;\;\rho\in(0,1].
\end{equation*}
Indeed, let us fix representatives $(\alpha,\beta,\gamma,v)$ and
$(\alpha_0,\beta_0,\gamma_0,v_0)$. Define
\begin{equation}\label{ZFUNCDEF}
z(\rho)=\f{1}{3\rho^{2/3}}\int_1^{\rho} \,s^{-1/3}G_2(s) + \f{v(s)-v_0(s)}{h} \,ds +C_0\\
\end{equation}
and let $A=\{\rho\in(0,1]:\, G_1(\rho) \neq z(\rho)\}$. Take any
$\rho_0\in A$ and define
\begin{equation*}
y_0=\left.\BBR{z(\rho)-\varphi'( \beta\rho^{2/3}) -
2g'(3\gamma'\rho^{1/3})(\alpha_0/\rho)^{1/3}}\right|_{\rho=\rho_0}.
\end{equation*}
Then by (A1) and (A2) it follows that there exists a unique $x_0$
such that $h'(x_0)=y_0\BBR{{\rho_0}/{\alpha_0(\rho_0)}}^{2/3}$.
Now, by definition of $G_1$ we have for all $\rho\in(0,1]$
\begin{equation}\label{G1EXLP}
G_1(\rho)=\varphi'( \beta\rho^{2/3}) +
2g'(3\gamma'\rho^{1/3}/2)(\alpha_0/\rho)^{1/3} +
h'(\alpha')(\alpha_0/\rho)^{2/3}.
\end{equation}
Thus assigning $\alpha'(\rho_0)=x_0$ we get
$G_1(\rho_0)=z(\rho_0)$. In the end, after altering this way
$\alpha'$ on the set $A$, we get that $G_1(\rho)=z(\rho)$ for all
$\rho\in (0,1]$. Moreover by (\ref{G1ABSCONT}) we have $mA=0$ and
this finishes the proof.\par\medskip

The following regularity lemma requires a smoother initial iterate than before. In
particular we prove:

\begin{lmm}[\bf Regularity]\label{REGULARITY}
Let  $(\alpha,\beta,\gamma,v)\in \mathcal{A}_{\lambda}$ be the
minimizer of $I$ over $\mathcal{A}_{\lambda}$. Assume that the initial
iterate $(\alpha_0,\beta_0,\gamma_0,v_0)$ satisfies
(\ref{IDATAPROP}),
\begin{equation}\label{IDATAPROP3}
\alpha_0,\gamma_0\in C^1(0,1] \;\;\; \tx{and} \;\;\; \beta_0\in
C(0,1].
\end{equation}
Then
\begin{equation}
\alpha,\gamma,v\in C^1(0,1] \;\;\; \tx{and} \;\;\; \beta\in
C(0,1].
\end{equation}
\end{lmm}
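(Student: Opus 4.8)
The plan is to fix an arbitrary $\delta\in(0,1)$, prove the stated regularity on $[\delta,1]$, and then let $\delta\to0+$. On $[\delta,1]$ one has, by \eqref{IDATAPROP} and \eqref{IDATAPROP3}, that $\alpha_0$ is continuous, nonnegative and strictly increasing on $(0,1]$, hence $\alpha_0(\rho)\ge\alpha_0(\delta)>0$ there, so every positive power of $\alpha_0$ is $C^1$ and bounded below by a positive constant on $[\delta,1]$; moreover $\alpha$ has an absolutely continuous representative with $0\le\alpha\le\lambda$, and by Theorem \ref{ELWEAK} one has $\alpha'\in L^{3p}(\delta,1)\cap L^q(\delta,1)$ together with the identities \eqref{G1ABSCONT} and \eqref{G1EXLP} (the latter, after the measure-zero modification of $\alpha'$ carried out at the start of this section, holding for every $\rho\in(0,1]$). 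Using the affine constraints in $\mathcal A_\lambda$, from $(\alpha-\alpha_0)/h=3\alpha_0^{2/3}v$ we obtain $v=(\alpha-\alpha_0)/(3h\,\alpha_0^{2/3})$, which is continuous (indeed in $W^{1,1}$) on $[\delta,1]$, and then $\gamma=\gamma_0+2h\,\alpha_0^{1/3}v$ is continuous on $[\delta,1]$.

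The core step is to solve \eqref{G1EXLP} for $\alpha'$ pointwise. Differentiating the constraints expresses $\beta$ and $\gamma'$ as affine functions of $\alpha'$; concretely, the relations \eqref{ALPHAPEST1}--\eqref{ALPHAPEST2} give
\[
\beta\rho^{2/3}=b_0(\rho)+\frac{\rho^{2/3}}{\alpha_0^{2/3}}\,\alpha'(\rho),
\qquad
\tfrac32\,\gamma'\rho^{1/3}=c_0(\rho)+\frac{\rho^{1/3}}{\alpha_0^{1/3}}\,\alpha'(\rho),
\]
with $b_0,c_0$ continuous on $[\delta,1]$ (built from $\alpha,\alpha_0,\alpha_0',\beta_0,\gamma_0'$) and with \emph{positive} slopes. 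Substituting into \eqref{G1EXLP} rewrites it, for a.e. $\rho\in[\delta,1]$, as $G_1(\rho)=\Psi(\rho,\alpha'(\rho))$, where
\[
\Psi(\rho,x):=\varphi'\Big(b_0(\rho)+\tfrac{\rho^{2/3}}{\alpha_0^{2/3}}x\Big)+2g'\Big(c_0(\rho)+\tfrac{\rho^{1/3}}{\alpha_0^{1/3}}x\Big)\Big(\tfrac{\alpha_0}{\rho}\Big)^{1/3}+h'(x)\Big(\tfrac{\alpha_0}{\rho}\Big)^{2/3}.
\]
Here $G_1$ is continuous on $[\delta,1]$: by \eqref{G1ABSCONT}, $3\rho^{2/3}G_1(\rho)$ differs by a constant from $\int_1^{\rho}\big(s^{-1/3}G_2(s)+(v(s)-v_0(s))/h\big)\,ds$, whose integrand lies in $L^1(\delta,1)$ by Theorem \ref{ELWEAK} and $v,v_0\in L^2$, hence this primitive is absolutely continuous, and $\rho^{2/3}$ is continuous and positive on $[\delta,1]$.

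For each $\rho\in[\delta,1]$, $x\mapsto\Psi(\rho,x)$ is $C^1$ on $(0,\infty)$ and strictly increasing, its derivative being $\ge h''(x)(\alpha_0/\rho)^{2/3}>0$ by (A2) (the two remaining terms, carrying factors $\varphi''$ and $g''$, being nonnegative). By (A1) --- which forces $h'(x)\to-\infty$ as $x\to0+$ and $h'(x)\to+\infty$ as $x\to+\infty$ --- and the monotonicity of $\varphi',g'$, one gets $\Psi(\rho,x)\to-\infty$ as $x\to0+$ and $\Psi(\rho,x)\to+\infty$ as $x\to+\infty$, uniformly for $\rho\in[\delta,1]$; hence $\Psi(\rho,\cdot)\colon(0,\infty)\to\RR$ is a continuous increasing bijection, and a standard argument (joint continuity of $\Psi$ together with the uniform blow-up, which confines preimages to a fixed compact subset of $(0,\infty)$) shows $(\rho,y)\mapsto\Psi(\rho,\cdot)^{-1}(y)$ is continuous on $[\delta,1]\times\RR$. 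Consequently $\alpha'(\rho)=\Psi(\rho,\cdot)^{-1}\big(G_1(\rho)\big)$ for a.e. $\rho\in[\delta,1]$; the right-hand side is continuous in $\rho$, and since $\alpha'$ is the weak derivative of $\alpha\in W^{1,1}$, this gives $\alpha\in C^1[\delta,1]$ with $\alpha'$ equal to that continuous function.

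The bootstrap then closes: $v'=(\alpha'-\alpha_0'-2h\alpha_0^{-1/3}\alpha_0'v)/(3h\alpha_0^{2/3})$ is continuous on $[\delta,1]$, so $v\in C^1[\delta,1]$; then $\beta=\beta_0+3hv'$ is continuous on $[\delta,1]$; and $\gamma'=\gamma_0'+\tfrac{2h}{3}\alpha_0^{-2/3}\alpha_0'v+2h\alpha_0^{1/3}v'$ is continuous on $[\delta,1]$, so $\gamma\in C^1[\delta,1]$. Since $\delta\in(0,1)$ was arbitrary, $\alpha,\gamma,v\in C^1(0,1]$ and $\beta\in C(0,1]$. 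I expect the main difficulty to be the middle part: eliminating $\beta,\gamma'$ in favour of $\alpha'$ so as to reduce \eqref{G1EXLP} to the scalar identity $G_1(\rho)=\Psi(\rho,\alpha'(\rho))$, and then verifying that $\Psi(\rho,\cdot)$ is a bijection of $(0,\infty)$ onto $\RR$ with a continuously varying inverse --- precisely the point where the structural hypotheses (A1) and (A2), in particular the strict convexity $h''>0$, are used.
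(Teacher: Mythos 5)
Your argument is correct and follows essentially the same route as the paper: eliminate $\beta$ and $\gamma'$ in favour of $\alpha'$ via the affine constraints, rewrite $G_1(\rho)=\Psi(\rho,\alpha'(\rho))$, observe that $\Psi(\rho,\cdot)$ is a continuous strictly increasing bijection of $(0,\infty)$ onto $\RR$ with a continuously varying inverse (using (A1), (A2), in particular $h''>0$ and $h'\to\mp\infty$ at the endpoints), and bootstrap. The only cosmetic differences are that you work on $[\delta,1]$ and then let $\delta\to0+$ rather than directly on $(0,1]$, and that you avoid the paper's pointwise modification of $\alpha',\beta,\gamma'$ on null sets by instead concluding that the weak derivative $\alpha'$ agrees a.e. with a continuous function; as a small bonus, you have the arguments of $\varphi'$ and $g'$ matched correctly to $\beta\rho^{2/3}$ and $\frac32\gamma'\rho^{1/3}$, where the paper's display \eqref{FDEF} has $f_1$ and $f_2$ transposed.
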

\begin{proof}
Clearly, we can pick a representative $(\alpha,\beta,\gamma,v)$
such that $\alpha,\gamma,v\in C(0,1]$. Proceeding as in
(\ref{ALPHAPEST1}) and (\ref{ALPHAPEST2}), the constraints
$\f{\alpha-\alpha_0}{h}=3{\alpha_0}^{2/3}v$,
$\f{\gamma-\gamma_0}{h}=2{\alpha_0}^{1/3}v$ and
$\f{\beta-\beta_0}{h}=3v'$ imply for a.e. $\rho\in(0,1)$
\begin{equation}\label{BPVIAALPHAP}
\beta\rho^{2/3}=\alpha'(\rho/\alpha_0)^{2/3} + f_1(\rho)\\
\end{equation}
and
\begin{equation}\label{GPVIAALPHAP}
\f{3}{2}\gamma'\rho^{1/3}=\alpha' (\rho/\alpha_0)^{1/3} + f_2(\rho),\\
\end{equation}
where
\begin{equation*}
\begin{aligned}
 f_1(\rho)&=\beta_0\rho^{2/3} - \f{\alpha_0'\rho^{2/3}}{3{\alpha_0}^{2/3}}\BBR{1+\f{2\alpha}{\alpha_0}} \\
 f_2(\rho)&=\f{3}{2}{\gamma_0'}\rho^{1/3} -
 \f{\rho^{1/3}}{\alpha_0^{1/3}}\BBR{2+\f{\alpha}{\alpha_0}}.
\end{aligned}
\end{equation*}
We note that (\ref{IDATAPROP3}) implies that $f_1$ and $f_2$ are
continuous on $(0,1]$ functions.
\par\bigskip

First, we alter $\beta$ and $\gamma'$ so that equality in
(\ref{BPVIAALPHAP}) and (\ref{GPVIAALPHAP}) holds for all
$\rho\in(0,1)$. Hence by (\ref{G1EXLP}) we have for all $\rho\in
(0,1]$
\begin{equation}\label{G1EXPLS2}
\begin{aligned}
G_1(\rho)&=\varphi'\BBR{ \alpha'(\rho/\alpha_0)^{2/3} + f_2(\rho)} \\
&+2g'\BBR{\alpha'(\rho/\alpha_0)^{1/3}+f_1(\rho)}(\alpha_0/\rho)^{1/3}\\
&+h'(\alpha')(\alpha_0/\rho)^{2/3}.\\
\end{aligned}
\end{equation}
and this suggests to define $f:\RR_+ \times (0,1]\to \RR$ by
\begin{equation}\label{FDEF}
\begin{aligned}
f(x,\rho)&=\varphi'\BBR{ x(\rho/\alpha_0)^{2/3} + f_2(\rho)} \\
&+2g'\BBR{x(\rho/\alpha_0)^{1/3}+f_1(\rho)}(\alpha_0/\rho)^{1/3}\\
&+h'(x)(\alpha_0/\rho)^{2/3}.\\
\end{aligned}
\end{equation}\\
Now, define $A=\{\rho\in(0,1]:\, G_1(\rho)\neq z(\rho)\}$.
Clearly, $mA=0$ and note that from (\ref{G1EXPLS2}) it follows
\begin{equation}\label{G1REPR}
G_1(\rho)=f(\alpha',\rho)=z(\rho),\;\;\; \rho \notin A.
\end{equation}
Take $\rho_0\in A$. Then, as $\rho_0>0$ and $\alpha_0(\rho_0)>0$,
properties (A1)-(A3) imply that $f_x(x,\rho_0)>0$ for all $x\in
\RR_+$; moreover, $\lim_{x\to 0+}f(x,\rho_0)=-\infty$ and
$\lim_{x\to+\infty}f(x,\rho_0)=+\infty$. Hence there exists unique
$x_0\in \RR_+$ such that $f(x_0,\rho_0)=z(\rho_0)$.
\par\smallskip

At this point we are ready to assign new values for
$\alpha',\beta$ and $\gamma'$. Define
\begin{equation*}
\alpha'(\rho_0)=x_0,\;\;\;\beta(\rho_0)=\f{x_0}{{\alpha_0(\rho_0)}^{2/3}}+\f{f_1(\rho_0)}{\rho_0^{2/3}}\\
\end{equation*}
and
\begin{equation*}
\gamma'(\rho_0)=\f{2}{3}\BBR{
\f{x_0}{{\alpha_0(\rho_0)}^{1/3}}+\f{f_2(\rho_0)}{\rho_0^{1/3}} }.
\end{equation*}\\
This implies that (\ref{BPVIAALPHAP}) and (\ref{GPVIAALPHAP}) hold
at $\rho=\rho_0$ and hence by (\ref{G1EXLP})
\begin{equation}\label{G1REPR2}
G_1(\rho_0)=f(x_0,\rho_0)=f(\alpha'(\rho_0),\rho_0)=z(\rho_0).
\end{equation}
As  $\rho_0\in A$ was arbitrary (\ref{G1REPR}) and (\ref{G1REPR2})
imply
\begin{equation}\label{G1REPR3}
G_1(\rho)=f(\alpha',\rho)=z(\rho),\;\;\; \rho\in(0,1].
\end{equation}
Hence $G_1$ is continuous on $(0,1]$ and therefore $\alpha'>0$ for
all $\rho\in(0,1]$.\par\bigskip

Now, let us assume $\rho_k\to\rho_0$ and $\alpha'(\rho_k)\to
l\in[0,\infty]$ with $\rho_k,\rho_0\in(0,1],\;k\in \mathbb{N}$.
First, we claim that $l\in(0,\infty)$. Indeed, assume that $l=0$
or $l=+\infty$. Then by continuity of $\alpha_0$ we have
 $\alpha_0(\rho_k)\to\alpha_0(\rho_0)>0$ and hence properties
(A1)-(A3), together with continuity of $f_1$ and $f_2$, imply
$\lim_{k\to\infty} f(\alpha'(\rho_k),\rho_k)=\mp\infty$
respectively. Thus by continuity of $G_1$ and (\ref{G1REPR3}) we
have
\begin{equation}
G_1(\rho_0)=\lim_{k\to\infty}
G_1(\rho_k)=\lim_{k\to\infty}f(\alpha'(\rho_k),\rho_k)=\mp\infty
\end{equation}
which is a contradiction. Therefore we assume $l\in (0,\infty)$.
As $f_1$, $f_2$ are continuous on $(0,1]$, we
must have $\lim_{k\to\infty}
f(\alpha'(\rho_k),\rho_k)=f(l,\rho_0)$ and therefore by
(\ref{G1REPR3}) we get
\begin{equation}
\begin{aligned}
f(\alpha'(\rho_0),\rho_0)=G_1(\rho_0)&=\lim_{k\to\infty}
G_1(\rho_k)\\
&=\lim_{k\to\infty}f(\alpha'(\rho_k),\rho_k)=f(l,\rho_0).
\end{aligned}
\end{equation}
By the strict monotonicity of $f(\cdot,\rho_0)$ we get
$\alpha_0(\rho_0)=l$ and conclude that $\alpha'$ is
continuous on $(0,1]$.\par\smallskip

Finally, from the discussion above it follows that equalities
(\ref{BPVIAALPHAP}) and (\ref{GPVIAALPHAP}) hold for all
$\rho\in(0,1]$. The continuity of $f_1,f_2$ and
$\alpha'$ imply $\beta,\gamma'\in C(0,1].$ Moreover, as
$\f{\alpha-\alpha_0}{h}=3{\alpha_0}^{2/3}v$ for all
$\rho\in(0,1]$, we obtain $v\in C^1(0,1]$. This finishes the
proof.
\end{proof}



\section{Acknowledgements}
Research partially supported by the EU FP7-REGPOT project "Archimedes Center for
Modeling, Analysis and Computation" and the EU EST-project "Differential Equations and Applications
in Science and Engineering".


\begin{thebibliography}{99}

\small

\bibitem{ball77}
{\sc J.M. Ball},
Convexity conditions and existence theorems in nonlinear elasticity,
{\em Arch. Rational Mech. Anal.} {\bf 63} (1977), 337-403.

\bibitem{BCO}
{\sc J.M. Ball, J.C. Currie, P.J. Olver},
Null Lagrangians, weak continuity and variational problems of arbitrary order,
{\em J. Funct. Anal.} {\bf 41} (1981), 135-174.

\bibitem{Ba}
{\sc  J.M. Ball},{ Discontinuous Equilibrium Solutions and
Cavitation in Nonlinear Elasticity, {\itshape Phil. Trans. of the
Royal Society of London. Series A, Mathematical and Physical
Sciences,} {\bfseries 306} (1982) 557-611. }

\bibitem{Brenier}
{\sc Y. Brenier},  Hydrodynamic structure of the augmented Born-Infeld equations,
{\it Arch. Rational Mech. Anal.} {\bf 172} (2004) 65–-91

\bibitem{Ciarlet}
{\sc P.G. Ciarlet},
{\em Mathematical Elasticity}, Vol. 1,
North Holland, (1993).

\bibitem{Dm}
{\sc C.M. Dafermos},  {\em Hyperbolic conservation laws in continuum physics.}
Third edition. Grundlehren der Mathematischen Wissenschaften, 325. Springer-Verlag, Berlin, 2010.

\bibitem{Tz}
{\sc S. Demoulini, D.M.A. Stuart, A.E.Tzavaras},
Construction of entropy solutions for one dimensional
elastodynamics via time discretization, {\itshape  Ann. Inst. H.
Poincar\'e Anal. Non Lin\'eaire} {\bfseries 17} (2000),  711--731.

\bibitem{Tz2}
{ \sc S. Demoulini, D.M.A. Stuart, A.E.Tzavaras},
A variational approximation scheme for three
dimensional elastodynamics with polyconvex energy,
{\itshape Arch. Rational Mech. Anal.} {\bfseries 157}
(2001),  325--344.


\bibitem{Gu}
{\sc  M.E. Gurtin}, {\it Topics in Finite Elastisity},
CBMS-NSF Regional Conference Series in Applied Mathematics, 35.
Society for Industrial and Applied Mathematics (SIAM), Philadelphia, Pa., 1981.


\bibitem{LT}
{\sc  C. Lattanzio, A.E. Tzavaras},
Structural properties of stress relaxation and convergence from
viscoelasticity to polyconvex elastodynamics, {\itshape Archive
for Rational Mechanics and Analysis} {\bfseries 180} (2006), 449--492.

\bibitem{Qn}
{\sc T. Qin},
Symmetrizing nonlinear elastodynamic system,
{\itshape J. Elasticity} {\bfseries 50} (1998), 245-252.


\bibitem{Serre}
{\sc D. Serre}, Hyperbolicity of the nonlinear models of Maxwell's equations,
{\it Arch. Rational Mech. Anal.} {\bf 172} (2004), 309–-331.

\bibitem{Serre2}
{\sc D. Serre}, Non-linear electromagnetism and special relativity,
{\it Discr. Cont. Dynam. Systems} {\bf 23}, (2009), 435-453.


\bibitem{Tr}
{\sc C. Truesdell, W. Noll}, {\it The non-linear field theories of mechanics}.
Handbuch der Physik {\bfseries III}, 3 (Ed. S.Fl\"{u}gge), Berlin: Springer, 1965.


\end{thebibliography}
\end{document}